\documentclass{article}

\usepackage[usenames,dvipsnames]{pstricks}
\usepackage{epsfig}
\usepackage{pst-grad} 
\usepackage{pst-plot} 
\usepackage[space]{grffile} 
\usepackage{etoolbox} 
\makeatletter 
\patchcmd\Gread@eps{\@inputcheck#1 }{\@inputcheck"#1"\relax}{}{}
\makeatother

\usepackage[overload]{empheq}

 \usepackage{mathtools}

\usepackage{amsmath}
\usepackage{amssymb}
\usepackage{amsthm}
\usepackage[latin1]{inputenc}
\usepackage{graphicx}

\usepackage[T1]{fontenc}


\usepackage{ifthen}
\usepackage{xcolor}


\newcommand{\intav}[1]{\mathchoice {\mathop{\vrule width 6pt height 3 pt depth  -2.5pt
\kern -8pt \intop}\nolimits_{\kern -6pt#1}} {\mathop{\vrule width
5pt height 3  pt depth -2.6pt \kern -6pt \intop}\nolimits_{#1}}
{\mathop{\vrule width 5pt height 3 pt depth -2.6pt \kern -6pt
\intop}\nolimits_{#1}} {\mathop{\vrule width 5pt height 3 pt depth
-2.6pt \kern -6pt \intop}\nolimits_{#1}}}

\def\polhk#1{\setbox0=\hbox{#1}{\ooalign{\hidewidth\lower1.5ex\hbox{`}\hidewidth\crcr\unhbox0}}}

\renewcommand{\div}{\operatorname{div}}

\renewcommand{\div}{\operatorname{div}}

\newtheorem{teo}{Theorem}

\newtheorem{Definition}{Definition}
\newtheorem{Lemma}{Lemma}
\newtheorem{Corollary}{Corollary}
\newtheorem{Proposition}{Proposition}
\newtheorem{Remark}{Remark}
\newtheorem{Assumption}{A}

\usepackage{setspace}
\setstretch{1.2}

\begin{document}

\title{A fully nonlinear degenerate free transmission problem}
\author{Gerardo Huaroto, Edgard A. Pimentel, Giane C. Rampasso \\and Andrzej \'{S}wi\k{e}ch}

\date{\today} 

\maketitle

\begin{abstract}
\noindent We study a free transmission problem driven by degenerate fully nonlinear operators. Our first result concerns the existence of a viscosity solution to the associated Dirichlet problem. By framing the equation in the context of viscosity inequalities, we prove regularity results for the constructed viscosity solution to the problem. Our findings include regularity in $ C^{1,\alpha}$ spaces, and an explicit characterization of $\alpha$ in terms of the degeneracy rates. We argue by perturbation methods, relating our problem to a homogeneous, fully nonlinear uniformly elliptic equation. 

\medskip

\noindent \textbf{Keywords}:  Free transmission problems; optimal regularity of solutions; existence of solutions; viscosity inequalities.

\medskip 

\noindent \textbf{MSC(2020)}: 35B65; 35J60; 35J70; 35R35.
\end{abstract}

\vspace{.1in}

\section{Introduction}\label{sec_introduction}

We examine viscosity solutions $u\in C(\overline\Omega)$ to the free transmission problem
\begin{equation}\label{eq_main}
	\begin{split}
		\left|Du\right|^{\theta_1}F(D^2u)&=f(x)\hspace{.2in}\mbox{in}\hspace{.2in}\Omega^+(u)\cap\Omega\\
		\left|Du\right|^{\theta_2}F(D^2u)&=f(x)\hspace{.2in}\mbox{in}\hspace{.2in}\Omega^-(u)\cap\Omega,
	\end{split}
\end{equation}
where $\Omega$ is a bounded domain in $\mathbb{R}^d$, $\Omega^+(u):=\{u>0\}$, and $\Omega^-(u):=\{u<0\}$. In addition, $F:S(d)\to\mathbb{R}$ is a $(\lambda,\Lambda)$-elliptic operator, $\theta_i>0, i=1,2,$ are fixed constants, the source term $f:\Omega\to\mathbb{R}$ is a bounded function and $S(d)$ stands for the space of $d\times d$ symmetric matrices. We prove the existence of viscosity solutions to the Dirichlet problem associated with \eqref{eq_main} and establish optimal regularity in $ C^{1,\alpha}$-spaces, with appropriate estimates. 

The model in \eqref{eq_main} accounts for a diffusion process degenerating as a power of the gradient. The degeneracy law depends on the sign of the solution, which introduces discontinuities along $\partial\{u>0\}$ and $\partial\{u<0\}$. We emphasize the subregions where distinct degeneracy regimes take place are \emph{unknown a priori}, and depend on the solution. Therefore, the transmission interface is understood as a free boundary. We notice there is no a priori reason for $\{u=0\}=\emptyset$, or even for the zero level set of $u$ to have null measure in $\Omega$; as a consequence, \eqref{eq_main} prescribes a PDE only in a subregion of 
$\Omega$. 

Transmission problems account for diffusion processes in heterogeneous media, with applications to thermal and electromagnetic conductivity and composite materials, such as fiber-reinforced structures. A typical formulation can be described as follows. Given a domain $\Omega\subset\mathbb{R}^d$, we choose $k-1$ open, mutually disjoint subregions $\Omega_i\Subset\Omega$, for $i=1,\,\ldots,\,k-1$, and set
\[
	\Omega_k\,:=\,\Omega\setminus\bigcup_{i=1}^{k-1}\overline{\Omega_i}.
\]
Inside each $\Omega_i$ a different equation is prescribed. For example, let $A:\Omega\to\mathbb{R}^{d^2}$ be a matrix-valued mapping and consider
\[
	\div\left(A(x)Du\right)\,=\,0\hspace{.4in}\mbox{in}\hspace{.1in}\Omega,
\]
where 
\[
	A(x)\,=\,A_i,
\]
for $x\in\Omega_i$, where $A_i$ are constant matrices and $i=1,\,\ldots,\,k$. Within each subregion, $u$ solves a divergence-form equation governed by constant coefficients. However, across the transmission interface $\partial\Omega_i$, the diffusion process may be discontinuous. 

Those discontinuities introduce difficulties in the study of the problem, affecting the understanding of properties such as existence and uniqueness of solutions and their regularity. Here, an important aspect of the analysis is the geometry of the transmission surfaces.

The first treatment of this class of problems appeared in \cite{Picone1954}, and was followed by a number of developments, pursued by several authors; we mention \cite{Borsuk1968,Campanato1957,Campanato1959,Campanato1959a,Iliin-Shismarev1961,Lions1955,Oleinik1961,Schechter1960,Stampacchia1956,Sheftel1963}. The findings reported in these papers concern the well-posedness of transmission problems in distinct settings. For a comprehensive account of these results, we refer the reader to \cite{Borsuk2010}.

The regularity of solutions to transmission problems has also been investigated in the literature. In \cite{Li-Vogelius2000} the authors consider a bounded domain $\Omega$ in the presence of a finite number of subregions $(\Omega_i)_{i=1}^k$ which are known a priori. In the interior of each sub-region an equation in the divergence-form holds. Under regularity assumptions on the diffusion coefficients and the geometry of the transmission interface, the authors prove that solutions are $ C^{1,\alpha}$-regular, locally; their estimates do not depend on the proximity of the sub-regions (compare with \cite{Bonnetier2000}). They also connect this model with the analysis of composite materials with closely spaced inclusions; the typical object here is a fiber-reinforced structure. In this context, the gradient relates to the stress of the material. As a consequence, their findings provide information on the qualitative behavior of this quantity as well. Of particular interest is the independence of the estimates of the location of the sub-regions: here, it translates into a result \emph{independent of the location of the fibers}. A vectorial counterpart of those results is reported in \cite{Li-Nirenberg2003}. In that paper, the authors also derive bounds on higher derivatives of the solutions, under additional conditions on the data of the problem.

In \cite{Bao-Li-Yin2009,Bao-Li-Yin2010} the authors examine a transmission problem related to the theory of conductivity. The model in \cite{Bao-Li-Yin2009} consists of a bounded domain with two compactly contained sub-regions. These are $\varepsilon$-apart, where $\varepsilon>0$ is a parameter. Inside each sub-region, the divergence-form equation governing the problem has constant coefficient $k>0$. In the complementary region the diffusion coefficient is taken to be equal to $1$. The authors consider the case of perfect conductivity $(k=+\infty)$ and examine the behavior of gradient bounds for the solutions as $\varepsilon\to 0$. Though it is known that such bounds deteriorate as both subregions approach each other, the findings of \cite{Bao-Li-Yin2009} produce a blow-up rate for those estimates. In \cite{Bao-Li-Yin2010} the authors examine
the case of multiple subregions -- standing for multiple inclusions -- and consider also the case of insulation $(k=0)$. We also mention \cite{Briane-Capdeboscq-Nguyen2013}.

A typical bottleneck in the regularity theory for solutions to transmission problems is the geometry of the transmission interface. In \cite{Caffarelli-Carro-Stinga2020}, the authors consider a domain $\Omega\subset \mathbb{R}^d$ and a subregion $\Omega_1\Subset\Omega$. By prescribing $\Omega_1$, they also define $\Omega_2:=\Omega\setminus\Omega_1$. The main contribution of that paper is in the fact that $\partial\Omega_1$ is supposed to be merely of class $ C^{1,\alpha}$. Under this assumption, and a balance condition relating the normal derivatives of the solutions across the transmission interface, the authors prove $ C^{1,\alpha}$-regularity of solutions to a problem driven by the Laplace operator. Their arguments rely on a new stability result. The latter connects the transmission problem under analysis to an auxiliary model, with flat interfaces.

The developments mentioned so far concern transmission problems in which the sub-regions of interest are known a priori. However, a natural generalization regards the case where those subregions depend on the solution and, as a consequence, are endogenously determined. In this case, transmission problems can be framed in the context of free boundary analysis. This is precisely the context in the present work. Owing to the fact that the transmission interface behaves as a free boundary, this class of models is referred to as \emph{free transmission problems}.

In \cite{Amaral-Teixeira2015}, the authors consider a free transmission problem governed by the minimization of the functional
\begin{equation}\label{eq_func}
	I(v)\,:=\,\int_{\Omega}\left(\frac{1}{2}\left\langle A(x,v)Dv,\,Dv\right\rangle\,+\,\Lambda(v)\,+\,fv\right)\,dx,
\end{equation}
where
\[
	\begin{split}
		&A(x,u)\,:=\,A_+(x)\chi_{\left\lbrace u>0\right\rbrace}\,+\,A_-(x)\chi_{\left\lbrace u\leq0\right\rbrace},\\
		&\Lambda(u)\,:=\,\lambda_+(x)\chi_{\left\lbrace u>0\right\rbrace}\,+\,\lambda_-(x)\chi_{\left\lbrace u\leq0\right\rbrace},\\
		&f\,:=\,f_+(x)\chi_{\left\lbrace u>0\right\rbrace}\,+\,f_-(x)\chi_{\left\lbrace u\leq0\right\rbrace},
	\end{split}
\]
and $A_\pm:\Omega\to\mathbb{R}^{d^2}$ are matrix-valued mappings satisfying suitable ellipticity conditions, $f_\pm:\Omega\to\mathbb{R}$ are source terms in appropriate Lebesgue spaces and $\lambda_\pm:\Omega\to\mathbb{R}$ encode balance conditions of the model. The critical points of \eqref{eq_func} satisfy the divergence form equations
\begin{equation}\label{eq_amtei}
	\begin{cases}
		\div(A_+(x)Du(x))\,=\,f_+&\hspace{.4in}\mbox{in}\hspace{.1in}\{u>0\}\cap\Omega\\
		\div(A_-(x)Du(x))\,=\,f_-&\hspace{.4in}\mbox{in}\hspace{.1in}\{u\leq0\}^\circ\cap\Omega,
	\end{cases}
\end{equation}
equipped with a flux condition across the free transmission interface $\partial\{u>0\}\cap\Omega$; the latter is derived through a Hadamard-type argument. The authors establish the existence of minimizers for \eqref{eq_func}, with uniform estimates in $L^\infty(\Omega)$. Notice that the functional under analysis lacks convexity, which entails further difficulties in the analysis. Moreover, they prove that local minima have a modulus of continuity. Finally, the authors resort to a perturbation argument and suppose $A_+$ and $A_-$ to be close, in a suitable topology. Under those conditions, solutions to \eqref{eq_amtei} are proved to be asymptotically Lipschitz. 

The model under analysis in the present paper is a free transmission problem. Indeed, the regions where distinct degeneracy laws hold depend on the solution itself. In addition, the problem has a fully nonlinear, \emph{non-variational}, structure. It means there is no underlying minimization problem providing information about \eqref{eq_main}. Finally, the equation is allowed to degenerate, and does so as the gradient of the solutions vanishes. 

Fully nonlinear equations degenerating as a power of the gradient have been examined in various contexts; the work-horse of the theory takes the form
\begin{equation}\label{eq_power}
	|Du|^\theta F(D^2u)\,=\,f(x)\hspace{.4in}\mbox{in}\hspace{.1in}\Omega,
\end{equation}
where $F$ is a fully nonlinear uniformly elliptic operator, $\theta>-1$ is a constant, and $f\in L^\infty(\Omega)\cap C(\Omega)$. This is modeled as a non-variational, fully nonlinear, variant of the $p$-Poisson equation. Among the results available for the solutions to \eqref{eq_power}, we mention comparison and maximum principles, well-posedness for the Dirichlet problem, and an Aleksandroff-Bakelman-Pucci estimate; we refer the reader to \cite{Birindelli-Demengel2004}, \cite{Birindelli-Demengel2006}, \cite{Birindelli-Demengel2007A}, \cite{Birindelli-Demengel2007B}, \cite{Birindelli-Demengel2008}, \cite{Gonzalo-Felmer-Quaas2009}, and the references therein.

The regularity of solutions to \eqref{eq_power} is the subject of \cite{Imbert-Silvestre2013}, \cite{Birindelli-Demengel2014} and \cite{Araujo-Ricarte-Teixeira2015}. If $u\in C(\Omega)$ is a viscosity solution to \eqref{eq_power}, then $u\in C^{1,\alpha}_{\rm loc}(\Omega)$, with the appropriate estimates. In particular, the H\"older-exponent $\alpha$ satisfies
\[
	\alpha\,\in\left(0,\alpha_0\right),\hspace{.2in}\alpha\leq\frac{1}{1\,+\,\theta},
\]
where $\alpha_0$ stands for the exponent in the regularity theory associated with the homogeneous equation $F=0$; see Proposition \ref{th:alpha0} and, for instance, \cite[Section 5.3]{Caffarelli-Cabre1995}. If $F$ is convex, then $\alpha_0=1$ and solutions are precisely $ C^{1,\frac{1}{1+\theta}}$-regular, locally. 

The case of variable exponents $\theta=\theta(x)$ is the topic of \cite{Bronzi-Pimentel-Rampasso-Teixeira2018}. In that paper the authors examine the regularity of viscosity solutions under the assumption that $\theta:\Omega=B_1\to\mathbb{R}$ is a \emph{continuous} function. 

Thus the analysis of \eqref{eq_main} does not fall into the scope of that paper. Moreover, to the best of our knowledge, an $L^p$-viscosity theory under structural conditions accommodating \eqref{eq_power} is not available. See \cite{CCKS1996}; see also \cite{Koike2006}.

In \cite{imbert_silvestre_jems} the authors study elliptic equations holding only in regions where the \emph{gradient of the solutions is large}. The problem is formulated in terms of structural conditions involving the extremal Pucci operators and continuous, bounded, ingredients. In that setting, they prove that viscosity solutions are H\"older-continuous. Their argument involves a measure estimate associated with touching the solutions with cusps and examining the contact points. In \cite{mooney}, the author extends the results of \cite{imbert_silvestre_jems} to the context of measurable ingredients. By touching the graph of the solutions with paraboloids instead of cusps, and exploring a dyadic decomposition algorithm, the author proves a Harnack inequality and H\"older-continuity for the strong solutions to an equation holding only at the points where the gradient is large. The findings in \cite{mooney} are reported in the context of a linear elliptic operator for simplicity, as the results hold in the fully nonlinear setting as well.

In this article we obtain existence of viscosity solutions to a Dirichlet boundary value problem for \eqref{eq_main} (that is to equation \eqref{eq_deg0}) and study its regularity. To prove existence of viscosity solutions to \eqref{eq_deg0}, we consider approximating problems for which solutions are obtained by a fixed point argument. After this is done, a limiting procedure produces a viscosity solution to \eqref{eq_deg0}, in a suitable sense. The procedure to obtain a viscosity solution to \eqref{eq_deg0} guarantees that the solution is a viscosity subsolution and a viscosity supersolution to certain associated extremal differential equations on the whole $\Omega$. This fact is the key in obtaining the regularity of our viscosity solution. A uniform exterior sphere condition for $\Omega$ can be replaced by a different condition guaranteeing existence of suitable barriers, however we assume it to avoid unnecessary technicalities since the barriers are more explicit. Our existence result is the following.

\begin{teo}[Existence of solutions] \label{thm_existence}
Let $\Omega\subset \mathbb{R}^d$ be a bounded domain which satisfies a uniform exterior sphere condition. Suppose Assumptions A\ref{assump_Felliptic} and A\ref{assump_theta} (see below) hold, $f\in C(\overline\Omega)$ and $g\in C(\partial\Omega)$. Then there exists a viscosity solution $u\in C(\overline\Omega)$ to 
\begin{equation}\label{eq_deg0}
	\begin{cases}
			\left|Du\right|^{\theta_1}F(D^2u)=f(x)&\hspace{.2in}{\rm{in}}\hspace{.2in}\Omega^+(u)\cap\Omega\\
			\left|Du\right|^{\theta_2}F(D^2u)=f(x)&\hspace{.2in}{\rm{in}}\hspace{.2in}\Omega^-(u)\cap\Omega\\
			u=g&\hspace{.2in}{\rm{on}}\hspace{.2in}\partial\Omega.
	\end{cases}
\end{equation}
The solution $u$ obtained is a viscosity subsolution to
\begin{equation}\label{eq_cviscsub}
	\min\left\lbrace |Du|^{\theta_2}F(D^2u),\,F(D^2u)\right\rbrace\,=\,C_0 \quad\mbox{in}\hspace{.1in}\Omega
\end{equation}
and a viscosity supersolution to
\begin{equation}\label{eq_cviscsuper}
	\max\left\lbrace |Du|^{\theta_2}F(D^2u),\,F(D^2u)\right\rbrace\,=\,-C_0\quad\mbox{in}\hspace{.1in}
	\Omega,
\end{equation}
where $C_0=\|f\|_{L^\infty(\Omega)}$.
\end{teo}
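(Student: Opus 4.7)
The plan is to regularize the discontinuous coefficient $\theta_1\chi_{\{u>0\}}+\theta_2\chi_{\{u<0\}}$ into a continuous exponent, solve the resulting variable-exponent problem by a Schauder fixed-point argument, and then pass to the limit using stability of viscosity solutions. Fix $\varepsilon\in(0,1)$ and pick a continuous interpolation $\Theta_\varepsilon:\mathbb{R}\to[\min(\theta_1,\theta_2),\max(\theta_1,\theta_2)]$ with $\Theta_\varepsilon(s)=\theta_1$ for $s\ge\varepsilon$ and $\Theta_\varepsilon(s)=\theta_2$ for $s\le-\varepsilon$ (linear interpolation on $[-\varepsilon,\varepsilon]$ works). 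For every $v\in C(\overline\Omega)$ the auxiliary Dirichlet problem
\[
|Du|^{\Theta_\varepsilon(v(x))}F(D^2u)=f(x)\ \mbox{in}\ \Omega,\qquad u=g\ \mbox{on}\ \partial\Omega,
\]
has a continuous, positive exponent $x\mapsto\Theta_\varepsilon(v(x))$, so it fits the framework of fully nonlinear degenerate equations with continuous variable degeneracy rate. Existence and uniqueness of a continuous viscosity solution $u=:T_\varepsilon v$ should follow from Perron's method combined with the comparison principle; the uniform exterior sphere condition furnishes the boundary barriers needed for continuity up to $\partial\Omega$.

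Next, I would apply Schauder's theorem to $T_\varepsilon$ on a closed convex subset $K\subset C(\overline\Omega)$ consisting of functions sandwiched between a fixed global sub- and supersolution whose modulus of continuity depends only on $\|g\|_{L^\infty(\partial\Omega)}$, $\|f\|_{L^\infty(\Omega)}$, $F$, and $\Omega$. Uniform $L^\infty$ control on $T_\varepsilon(K)$ comes from those barriers, and equicontinuity (hence precompactness) follows from interior $C^{0,\beta}$ estimates for variable-exponent problems, uniform over exponents ranging in $[\min(\theta_1,\theta_2),\max(\theta_1,\theta_2)]$, combined with the uniform boundary modulus supplied by the barriers. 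Continuity of $T_\varepsilon$ on $K$ is then obtained from uniform continuity of $\Theta_\varepsilon$, which makes $v\mapsto\Theta_\varepsilon(v(\cdot))$ continuous from $C(\overline\Omega)$ to $C(\overline\Omega)$, together with stability and uniqueness of viscosity solutions under uniform convergence of the exponent. This produces a fixed point $u_\varepsilon\in K$ of $T_\varepsilon$.

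Finally, let $\varepsilon\to 0$. The uniform estimates give $u_{\varepsilon_k}\to u$ uniformly on $\overline\Omega$ along a subsequence, with $u=g$ on $\partial\Omega$. On any compact $K\Subset\Omega^+(u)$ we have $u>0$, hence $u_{\varepsilon_k}\ge\varepsilon_k$ on $K$ once $k$ is large, so $\Theta_{\varepsilon_k}(u_{\varepsilon_k})\equiv\theta_1$ on $K$; viscosity stability then yields $|Du|^{\theta_1}F(D^2u)=f$ on $K$ in the viscosity sense, and the same argument delivers $|Du|^{\theta_2}F(D^2u)=f$ in $\Omega^-(u)$, establishing \eqref{eq_deg0}. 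For the global extremal inequalities \eqref{eq_cviscsub}--\eqref{eq_cviscsuper}, note that at each level $\varepsilon$ one has $\Theta_\varepsilon(u_\varepsilon(\cdot))\in[\min(\theta_1,\theta_2),\max(\theta_1,\theta_2)]$ and $|f|\le C_0$; a short case analysis separating $|Du_\varepsilon|\ge 1$ from $|Du_\varepsilon|<1$, together with the sign of $F(D^2u_\varepsilon)$ and monotonicity of $t\mapsto t^\beta$, shows that $u_\varepsilon$ is a global viscosity subsolution of \eqref{eq_cviscsub} and supersolution of \eqref{eq_cviscsuper}, and these inequalities are preserved in the limit by stability.

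The main technical obstacle is the fixed-point step: securing a comparison principle and uniform $C^{0,\beta}$ estimates for $|Du|^{\theta(x)}F(D^2u)=f$ with $\theta(\cdot)$ merely continuous and positive, in a form robust enough to deliver both continuity of $T_\varepsilon$ on $K$ and equicontinuity of $T_\varepsilon(K)$ independent of $v$. For this I expect to adapt the results of \cite{Bronzi-Pimentel-Rampasso-Teixeira2018} on variable-exponent degenerate fully nonlinear equations, supplemented by standard viscosity arguments.
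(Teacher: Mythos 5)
Your overall architecture (freeze the exponent through a continuous interpolation of the sign of $v$, solve an auxiliary Dirichlet problem to define a map $T_\varepsilon$, apply Schauder, then pass to the limit and read off the extremal inequalities from $\theta\in[\theta_1,\theta_2]$ by a case analysis on $|Du|\gtrless 1$) is the same as the paper's, and your limit step and the derivation of \eqref{eq_cviscsub}--\eqref{eq_cviscsuper} are fine. The genuine gap is exactly at the point you flag as ``the main technical obstacle'' and then defer: your auxiliary problem keeps the fully degenerate equation $|Du|^{\Theta_\varepsilon(v(x))}F(D^2u)=f$, and for this equation the comparison principle (hence uniqueness, hence well-definedness and continuity of $T_\varepsilon$, and Perron's method itself) is not available for a sign-changing or vanishing right-hand side $f$. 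The known comparison results for $|Du|^{\theta}F(D^2u)=f$ (Birindelli--Demengel and related works) require either a strict sign on $f$ or a strictly monotone zeroth-order term; the variable-exponent paper you cite addresses interior regularity, not well-posedness of the Dirichlet problem, and the present paper explicitly points out that \eqref{eq_main} does not fall within its scope. Without uniqueness your $T_\varepsilon$ is not even a map, and the continuity argument via stability collapses.

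The paper resolves precisely this issue by regularizing more aggressively: the approximating problem is
\[
\left(\varepsilon+|Du|\right)^{\theta_\varepsilon^{v}(x)}\left[\varepsilon u+F(D^2u)\right]=f(x),
\]
where the exponent is additionally mollified so that it is Lipschitz in $x$. All three modifications are used in the comparison proof: the term $\varepsilon u$ gives the strict monotonicity that produces the quantity $\varepsilon\tau/2$ to contradict; the factor $\varepsilon+|Du|$ keeps the gradient weight bounded below by $\varepsilon^{\theta_2}$ so one can divide by it and control $\bigl|\ln\left(\varepsilon+|p|\right)\bigr|$ in the doubling argument (with a bare $|Du|^{\theta}$ both quantities blow up as the gradient at the doubling point degenerates); and the Lipschitz constant of $\theta_\varepsilon^v$ controls the difference of the two exponents at $x_\delta$ and $y_\delta$. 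If you want to salvage your proposal, replace your auxiliary problem by a regularization of this type (or otherwise supply a proof of comparison for your degenerate auxiliary equation, which you currently do not have), and note that composing $\Theta_\varepsilon$ with a merely continuous $v$ gives only a modulus of continuity for the exponent, whereas the doubling estimate as written needs a Lipschitz (or at least quantitatively controlled) exponent, which the paper obtains by mollification in $x$.
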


Once the existence of solutions is addressed, we examine their regularity. From this point on we assume for simplicity that $\Omega=B_1$.
We observe that the $ C^{1,\alpha}$-regularity only relies on the two differential
inequalities \eqref{eq_cviscsub}-\eqref{eq_cviscsuper}. The $ C^{1,\alpha}$ result is based
on a perturbation argument which relates \eqref{eq_cviscsub}-\eqref{eq_cviscsuper} with equation $\overline F=0$ for some $\overline F$ having the same structure as $F$. Our main theorem is the following.

\begin{teo}[H\"older-regularity of the gradient]\label{teo_main2}
Assume $\Omega=B_1$. Let $u\in  C(B_1)$ be a viscosity subsolution to \eqref{eq_cviscsub} and a a viscosity supersolution to \eqref{eq_cviscsuper} for some $C_0\geq 0$. Suppose Assumptions A\ref{assump_Felliptic} and A\ref{assump_theta} (see below) hold. Let
\begin{equation}\label{eq:alpharange}
	\alpha\,\in\,(0, \alpha_0),\,\,\alpha\leq \frac{1}{1\,+\,\theta_2}.
\end{equation}
Then $u\in  C^{1,\alpha}_{\rm loc}(B_1)$ and, for every $0<\tau<1$, there exists $C>0$ such that
\begin{equation}\label{eq:calphaC0}
	\left\|u\right\|_{ C^{1,\alpha}(B_\tau)}\leq C\left(\left\|u\right\|_{L^\infty(B_1)}+\max\left\lbrace C_0,C_0^\frac{1}{1+\theta_2}\right\rbrace\right),
\end{equation}
where $C=C(d,\lambda,\Lambda,\theta_2,\alpha,\tau)$.
\end{teo}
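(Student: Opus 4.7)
The plan is a perturbation-iteration argument in the spirit of Araujo-Ricarte-Teixeira \cite{Araujo-Ricarte-Teixeira2015}, transferring the $C^{1,\alpha_0}$ regularity of a homogeneous uniformly elliptic equation $\overline F(D^2h)=0$ to our $u$ satisfying \eqref{eq_cviscsub}-\eqref{eq_cviscsuper}. The structural observation that drives the whole argument is that the min/max formulation automatically makes $u$ a viscosity sub- and supersolution to $F(D^2u)=\pm C_0$, which by itself places $u$ within the reach of the classical Krylov-Safonov-Caffarelli-Cabr\'e theory for the uniformly elliptic part; the degenerate piece $|Du|^{\theta_2}F(D^2u)$ is used only to determine the correct homogeneity of the scaling.

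The first technical step is a normalization. By replacing $u$ with $u/K$, where $K:=\|u\|_{L^\infty(B_1)}+\eta^{-1}\max\{C_0,C_0^{1/(1+\theta_2)}\}$, and exploiting the $1$-homogeneity of $F$, I reduce to $\|u\|_{L^\infty(B_1)}\le 1$ together with $C_0\le\varepsilon$ for any preselected small $\varepsilon>0$. The second step is an approximation lemma: for each $\delta>0$ there exists $\varepsilon=\varepsilon(\delta)>0$ such that, under the above normalization, one can find $h\in C(\overline B_{1/2})$ with $F(D^2h)=0$ in $B_{1/2}$ and $\|u-h\|_{L^\infty(B_{1/2})}\le\delta$. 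I would establish this by contradiction and compactness: a violating sequence $(u_k)$ with $C_0^k\to 0$ admits a uniformly convergent subsequence, the H\"older equicontinuity coming from the $F$-inequalities alone, and stability of viscosity solutions forces the limit to solve $F(D^2u_\infty)=0$, yielding the required contradiction.

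The core of the proof is a geometric iteration producing affine functions $\ell_k(x)=a_k+b_k\cdot x$ with $\sup_{B_{\rho^k}}|u-\ell_k|\le\rho^{k(1+\alpha)}$ and $|b_{k+1}-b_k|\le C\rho^{k\alpha}$, for a small universal $\rho\in(0,1/2)$ determined from the $C^{1,\alpha_0}$ theory for $h$ (Proposition \ref{th:alpha0}). At each step I rescale $v_k(x):=\rho^{-k(1+\alpha)}[u(\rho^kx)-\ell_k(\rho^kx)]$, which, by the $1$-homogeneity of $F$, satisfies min/max inequalities with gradient term $|\rho^{k\alpha}Dv_k+b_k|^{\theta_2}$ and right-hand side $\rho^{k(1-\alpha)}C_0$. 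Applying the approximation lemma and the affine approximation of $h$ to $v_k$ then yields $\ell_{k+1}$. The main obstacle is that the rescaled inequality is not of the same form as the original, because of the gradient shift by $b_k$: in order to absorb $|\rho^{k\alpha}Dv_k+b_k|^{\theta_2}$ into the structure of the approximation lemma, one needs the factor $\rho^{k(\alpha\theta_2+\alpha-1)}$ to remain bounded, and this is exactly where the hypothesis $\alpha(1+\theta_2)\le 1$ enters. Once the iteration closes, $(b_k)$ is Cauchy, $(a_k)$ converges to $u(0)$, and the estimate $\sup_{B_{\rho^k}}|u-\ell_k|\le\rho^{k(1+\alpha)}$ upgrades to $u\in C^{1,\alpha}$ at the origin; translation invariance of the differential inequalities then delivers \eqref{eq:calphaC0} with the asserted dependence of $C$.
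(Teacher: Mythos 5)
Your overall architecture (normalize, approximate by a solution of a homogeneous uniformly elliptic equation, iterate affine approximations at scales $\rho^k$, use $\alpha\le\frac{1}{1+\theta_2}$ to keep the rescaled right-hand side small) matches the paper. However, the ``structural observation that drives the whole argument'' is false, and it creates a real gap. Being a viscosity subsolution of $\min\{|Du|^{\theta_2}F(D^2u),F(D^2u)\}=C_0$ does \emph{not} make $u$ a subsolution of $F(D^2u)=C_0$: at a test point the inequality only says that \emph{one} of the two branches is $\le C_0$, and when $|D\varphi|$ is small the degenerate branch can be the minimum while $F(D^2\varphi)$ is huge. What is true is that the $F$-inequalities hold only on the set where the gradient is large (e.g.\ $\{|Du|>1\}$), so the classical Krylov--Safonov/Caffarelli--Cabr\'e theory is not directly available, and your claimed source of equicontinuity for the compactness step evaporates. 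The paper instead obtains H\"older compactness from the Imbert--Silvestre regularity for equations holding only where the gradient is large (Proposition \ref{prop_mooney}, Corollary \ref{prop_hrm}), combined with an Ishii--Lions doubling argument; this is Proposition \ref{prop_compactness}, and it is proved for the \emph{shifted} inequalities \eqref{eq_cil1}--\eqref{eq_cil2} with estimates independent of the shift $q$.

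The second gap is in how you treat the gradient shift in the iteration. After rescaling, $v_k$ satisfies inequalities with gradient factor $|\rho^{-k\alpha}b_k+Dv_k|^{\theta_2}$, and the shift $\rho^{-k\alpha}b_k$ is in general unbounded (since $b_k\to Du(0)$ need not vanish). The restriction $\alpha(1+\theta_2)\le 1$ only guarantees that the rescaled right-hand side $c_k=\varepsilon\max\{\rho^{k(1-\alpha(1+\theta_2))},\rho^{k(1-\alpha)}\}$ stays $\le\varepsilon$; it does not ``absorb'' the shift into an unshifted approximation lemma, so applying your lemma (stated with $q$ absent) to $v_k$ is a step that fails. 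One must formulate and prove both the H\"older estimate and the approximation lemma for arbitrary $q$, uniformly in $q$ (Propositions \ref{prop_compactness} and \ref{prop_approx}). This uniformity is where the real work lies: in the compactness argument one must distinguish $|q_n|\to\infty$ (where the large-gradient branch yields $F_\infty(D^2u_\infty)=0$ directly) from $q_n\to q_\infty$ (where stability only gives the shifted min/max equation, and one must then subtract the linear function $-q_\infty\cdot x$ and run the cutting argument of \cite[Lemma 6]{Imbert-Silvestre2013} to conclude $F_\infty(D^2u_\infty)=0$). None of this appears in your proposal, so as written the approximation step cannot be invoked at any scale beyond the first.
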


If $F$ is a convex operator, the Evans-Krylov theory for equation $F=0$ becomes available giving $\alpha_0=1$. Then, Theorem \ref{teo_main2} produces an \emph{optimal regularity result} and viscosity solutions to a pair of differential inequalities \eqref{eq_cviscsub}-\eqref{eq_cviscsuper} are of class $ C^{1,\alpha^*}_{\rm loc}(B_1)$, where
\[
	\alpha^*\,:=\,\frac{1}{1\,+\,\theta_2},
\]
with appropriate estimates.

\begin{Remark}\normalfont
Consider the problem
\begin{equation}\label{eq_everywhere}
	|Du|^{\theta_1\chi_{\{u>0\}}+\theta_2\chi_{\{u<0\}}}F(D^2 u)=f\hspace{.2in}\mbox{in}\hspace{.2in}\Omega.
\end{equation}
It differs from \eqref{eq_main} in the sense that it prescribes the equation in the entire domain. The proof of Theorem \ref{thm_existence} also yields the existence of a viscosity solution $u$ to \eqref{eq_everywhere} in the standard sense on $\Omega^+(u)\cup\Omega^-(u)$, however on the set $\{u=0\}$, $u$ is only a viscosity subsolution to
\[
	\min\left\lbrace|Du|^{\theta_2}F(D^2u),F(D^2u)\right\rbrace=f(x)
\]
and a viscosity supersolution to
\[
	\max\left\lbrace|Du|^{\theta_2}F(D^2u),F(D^2u)\right\rbrace=f(x).
\]
Hence, the conclusion of Theorem \ref{teo_main2} is also available in this setting. Moreover it is also straightforward that a standard viscosity solution $u\in C(\Omega)$ to an equation $|Du|^{\theta(x)}F(D^2u)=f(x)$, where $\theta$ is defined everywhere on $\Omega$ but may be discontinuous, is a viscosity subsolution to \eqref{eq_cviscsub} and a viscosity supersolution to \eqref{eq_cviscsuper}. Therefore, the regularity result of Theorem \ref{teo_main2} also applies to such equations.
\end{Remark}

The remainder of this paper is organized as follows. In Section \ref{subsec_ma} we describe the main assumptions of the paper. In Section \ref{subsec_prelim} we recall some elementary notions and show preliminary results, whereas in Section \ref{subsec_smallness} we discuss scaling properties of the model. The proof of Theorem \ref{thm_existence} is in Section  \ref{sec_existence}. Section \ref{sec_improvedreg}
is devoted to the proof of Theorem \ref{teo_main2}.

\section{Preliminaries}\label{sec_pmma}

In this section we collect a few notions, known and preliminary results, and the assumptions under which we will work in this article. 

\subsection{Main assumptions}\label{subsec_ma}

Our main assumptions concern the uniform ellipticity of the fully nonlinear operator governing \eqref{eq_main} and
the degeneracy degree constants $\theta_1$ and $\theta_2$. 

\begin{Assumption}[Uniform ellipticity]\label{assump_Felliptic}
The function $F:S(d)\to\mathbb{R}$ is such that $F(0)=0$ and is $(\lambda,\Lambda)$-uniformly elliptic; that is, for some fixed constants
$0<\lambda\leq\Lambda$,
\[
\lambda\left\|N\right\|\,\leq\,F(M)\,-\,F(M\,+\,N)\,\leq\,\Lambda\left\|N\right\|
\]
for every $M,\,N\in S(d)$, with $N\geq0$. 
\end{Assumption}

\begin{Assumption}[Degeneracy rates]\label{assump_theta}
The constants $\theta_1,\,\theta_2\in\mathbb{R}$ satisfy
\[
	0\,<\,\theta_1\,<\,\theta_2.
\]	
\end{Assumption}

We notice that Assumption \ref{assump_Felliptic} in particular implies that $F$ is degenerate elliptic.

\subsection{Preliminary notions and results}\label{subsec_prelim}

We begin with the definition of the Pucci extremal operators. 

\begin{Definition}[Extremal operators]\label{def_pucci}
Let $0<\lambda\leq\Lambda$ be as in Assumption \ref{assump_Felliptic}. We define the extremal Pucci operators $\mathcal{P}_{\lambda,\Lambda}^\pm:S(d)\to\mathbb{R}$ as follows:
\[
	\mathcal{P}_{\lambda,\Lambda}^+(M)\,:=\,-\Lambda\sum_{e_i<0}e_i\,-\,\lambda\sum_{e_i>0}e_i
\]
and
\[
	\mathcal{P}_{\lambda,\Lambda}^-(M)\,:=\,-\Lambda\sum_{e_i>0}e_i\,-\,\lambda\sum_{e_i<0}e_i,
\]
where $\{e_1,\,e_2,\,\ldots,\,e_d\}$ are the eigenvalues of $M$.
\end{Definition}

We write $\mathcal{P}^\pm_{\lambda,\Lambda}=\mathcal{P}^\pm$, when ellipticity constants have been set. For properties of the extremal operators, we refer the reader to \cite[Section 2.2]{Caffarelli-Cabre1995} or \cite{CCKS1996}. For the sake of completeness, we recall the notion of viscosity solution, see \cite{Crandall-Ishii-Lions1992}. It is the so called ${ C}$-viscosity solution in the terminology of \cite{CCKS1996}.

\begin{Definition}[Viscosity solution]\label{def_cviscosity}
Let $G:\Omega\times\mathbb{R}\times\mathbb{R}^d\times S(d)\to\mathbb{R}$ be a degenerate elliptic operator. We say that an upper semicontinuous function $u:\Omega\to\mathbb{R}$ is a viscosity subsolution to 
\begin{equation}\label{eq_cviscdef}
	G(x,u,Du,D^2u)\,=\,0
\end{equation}
in $U$ if, whenever $\varphi\in C^2(\Omega)$ and $u\,-\,\varphi$ attains a local maximum at $x_0\in \Omega$, we have
\[
	G(x_0,u(x_0),D\varphi(x_0),D^2\varphi(x_0))\,\leq\,0.
\]
Similarly, we say that a lower semicontinuous function $u:\Omega\to\mathbb{R}$ is a viscosity supersolution to \eqref{eq_cviscdef} if, whenever 
$\varphi\in C^2(\Omega)$ and $u\,-\,\varphi$ attains a local minimum at $x_0\in \Omega$, we have
\[
	G(x_0,u(x_0),D\varphi(x_0),D^2\varphi(x_0))\,\geq\,0.
\]
If $u$ is both a viscosity subsolution and supersolution to \eqref{eq_cviscdef}, we say $u$ is a viscosity solution to \eqref{eq_cviscdef}.
\end{Definition}

We recall Perron's method, see e.g. Theorem 4.1 of \cite{Crandall-Ishii-Lions1992}.

\begin{Lemma}[Perron's method]
Let $\Omega$ be a bounded domain and $G\in  C(\Omega\times\mathbb{R}^d\times S(d))$ be degenerate elliptic. Suppose the comparison principle holds for \eqref{eq_cviscdef} with this $G$. Suppose further that there exist a viscosity subsolution $\underline{w}\in C(\overline\Omega)$ and a viscosity supersolution 
$\overline{w}\in C(\overline\Omega)$ of \eqref{eq_cviscdef} such that $\underline{w}\leq\overline{w}$ in $\Omega$ and 
$\underline{w}=\overline{w}$ on $\partial\Omega$ . Then
\[
	u(x):=\left\lbrace u(x)\,\mid\,\underline{w}\leq v\leq\overline{w},\hspace{.02in} \mbox{$v$ is a viscosity subsolution to \eqref{eq_cviscdef}}\right\rbrace
\]
is a viscosity solution to \eqref{eq_deg0}.
\end{Lemma}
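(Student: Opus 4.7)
The plan is to follow the classical Perron-type construction from Crandall--Ishii--Lions, interpreting the formula defining $u$ as
\[
  u(x) \,:=\, \sup\bigl\{\, v(x) \,:\, \underline{w}\leq v\leq \overline{w},\ v \text{ a viscosity subsolution to \eqref{eq_cviscdef}}\,\bigr\},
\]
which is nonempty since $\underline{w}$ lies in the family. I would then verify the following three stages: that the upper semicontinuous envelope $u^{*}$ is a viscosity subsolution, that the lower semicontinuous envelope $u_{*}$ is a viscosity supersolution, and that these two envelopes coincide with $u$, which then yields both continuity and the viscosity solution property; the boundary condition will follow from the sandwich $\underline{w}\leq u\leq \overline{w}$ together with $\underline{w}=\overline{w}$ on $\partial\Omega$.

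For the first stage I would invoke the standard fact that the pointwise supremum of a family of subsolutions is again a subsolution in the envelope sense: if $\varphi\in C^{2}(\Omega)$ and $u^{*}-\varphi$ attains a local max at $x_{0}$, then a diagonal/touching argument produces a sequence of admissible $v_{n}$ and points $x_{n}\to x_{0}$ at which $v_{n}-\varphi$ attains local maxima, and the subsolution inequality passes to the limit thanks to the continuity of $G$. By construction $\underline{w}\leq u^{*}\leq \overline{w}$.

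The decisive step is the bump argument for the supersolution property of $u_{*}$. Suppose, toward contradiction, that at some $\hat x\in\Omega$ one has a test function $\psi\in C^{2}$ such that $u_{*}-\psi$ has a strict local minimum at $\hat x$ and $G(\hat x,D\psi(\hat x),D^{2}\psi(\hat x))<0$. By continuity of $G$ this strict inequality persists on a small ball $B_{r}(\hat x)$, so $\psi_{\varepsilon}(x):=\psi(x)+\varepsilon-\delta|x-\hat x|^{2}$ is a classical (and hence viscosity) subsolution on $B_{r}(\hat x)$ for $\varepsilon,\delta>0$ chosen small enough. Since $u_{*}(\hat x)=\psi(\hat x)$ but $u_{*}>\psi$ strictly on $\partial B_{r}(\hat x)$, for sufficiently small $\varepsilon$ the function
\[
  U(x) \,:=\, \begin{cases} \max\{u(x),\psi_{\varepsilon}(x)\} & x\in B_{r}(\hat x),\\ u(x) & x\notin B_{r}(\hat x), \end{cases}
\]
remains bounded above by $\overline{w}$ (by choosing $r,\varepsilon$ small and using $u\leq \overline{w}$ with the strict subsolution margin of $\overline{w}$ at $\hat x$, or more simply by shrinking $\varepsilon$ until $\psi_{\varepsilon}\leq \overline{w}$), is admissible, and exceeds $u$ in a neighborhood of $\hat x$, contradicting the definition of $u$ as a supremum. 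The obstacle I most expect to be delicate is precisely this bump step: ensuring that $U$ does not violate the upper barrier $\overline{w}$ while still being strictly larger than $u$ somewhere, which requires choosing the localization parameters carefully and using that the envelope $u_{*}$ is strictly below $\overline{w}$ near $\hat x$ (otherwise $\hat x$ would be a minimum of $\overline{w}-\psi$ and $\overline{w}$ being a supersolution would already give $G\geq 0$ there).

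Having both envelope properties, I would finally apply the hypothesized comparison principle to the pair $(u^{*},u_{*})$ on $\Omega$. Near $\partial\Omega$, since $\underline{w}\leq u\leq \overline{w}$ and $\underline{w},\overline{w}$ are continuous and agree on $\partial\Omega$, both envelopes match $g$ at the boundary, supplying the boundary inequality needed to run the comparison principle. The comparison then gives $u^{*}\leq u_{*}$ in $\Omega$, and since trivially $u_{*}\leq u\leq u^{*}$, all three functions coincide, so $u\in C(\overline{\Omega})$ is both a viscosity subsolution and supersolution of \eqref{eq_cviscdef} satisfying $u=g$ on $\partial\Omega$, completing the proof.
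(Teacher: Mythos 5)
Your argument is correct and is essentially the classical Perron construction of Crandall--Ishii--Lions (Theorem 4.1 of their User's Guide), which is exactly what the paper invokes here without reproducing a proof: sup of admissible subsolutions, subsolution property of $u^{*}$, the bump argument for $u_{*}$ (including the correct resolution of the upper-barrier constraint via the supersolution property of $\overline{w}$), and the comparison principle to conclude $u^{*}=u_{*}$ with the boundary data enforced by $\underline{w}=\overline{w}$ on $\partial\Omega$. You also correctly read the displayed formula as a supremum and the conclusion as referring to \eqref{eq_cviscdef}, which is how the statement is meant.
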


We continue by stating the maximum principle for viscosity solutions, Theorem 3.2 of \cite{Crandall-Ishii-Lions1992}.

\begin{Proposition}\label{prop_cil}
Let $\Omega$ be a bounded domain and $H,\,G\in  C(B_1\times \mathbb{R}^d\times S(d))$ be degenerate elliptic. Let $u$ be a viscosity subsolution to $G(x,Du,D^2u)=0$ and $w$ be a viscosity supersolution to $H(x,Dw,D^2w)=0$
in $\Omega$. Let $\psi\in C^2(\Omega\times \Omega)$. Define $v:\Omega \times \Omega\to\mathbb{R}$ by
\[
	v(x,y)\,:=\,u(x)\,-\,w(y).
\]
Suppose further that $(\overline{x},\overline{y})\in \Omega\times \Omega$ is a local maximum of  $v\,-\psi$ in $\Omega\times \Omega$. Then, for each $\varepsilon>0$, there exist matrices $X$ and $Y$ in $\mathcal S(d)$ such that
\[
	G\left(\overline{x},D_x\psi(\overline{x},\overline{y}),X\right)\,\leq\,0\,\leq\,H\left(\overline{y},-D_y\psi(\overline{x},\overline{y}),Y\right),
\]
and the matrix inequality
\begin{equation*}
-\left(\frac{1}{\varepsilon}+\|A\|\right)I\,\leq\,
\left(
\begin{array}{ccc}
X   & 0 \\
0  &-Y 
\end{array}
\right)
\leq 
A+\varepsilon A^2
\end{equation*}
holds true, where $A=D^2\psi\left(\overline{x},\overline{y}\right)$.
\end{Proposition}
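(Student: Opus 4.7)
The plan is to reproduce the classical proof of the Crandall--Ishii ``Theorem on Sums''. The first step is to regularize $u$ and $w$ via sup- and inf-convolutions, setting
\[
u^{\mu}(x) \,:=\, \sup_{z\in\Omega}\bigl\{u(z)\,-\,\tfrac{1}{2\mu}|x-z|^2\bigr\},\qquad w_{\mu}(y) \,:=\, \inf_{z\in\Omega}\bigl\{w(z)\,+\,\tfrac{1}{2\mu}|y-z|^2\bigr\}.
\]
Then $u^{\mu}$ is semiconvex and $w_{\mu}$ is semiconcave, both with constant $1/\mu$, so by Alexandrov's theorem they are twice differentiable almost everywhere. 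Moreover, for $\mu$ small enough, on any slightly shrunken subdomain $u^{\mu}$ is still a viscosity subsolution of an equation obtained from $G=0$ by a small perturbation of the spatial variable, and similarly for $w_{\mu}$; as $\mu\to 0^+$, $u^{\mu}\downarrow u$ and $w_{\mu}\uparrow w$ locally uniformly.

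Since $(\bar x, \bar y)$ is a local maximum of $v-\psi$, a standard stability argument produces points $(x_{\mu}, y_{\mu}) \to (\bar x, \bar y)$ at which $u^{\mu}(x) - w_{\mu}(y) - \psi(x,y)$ has a local maximum. To extract genuine Hessians I would invoke Jensen's lemma: for each $\delta>0$ there is a quadratic perturbation $q_{\delta}$ of norm at most $\delta$ such that $u^{\mu} - w_{\mu} - \psi - q_{\delta}$ attains a strict local maximum at a point $(x',y')$ where both $u^{\mu}$ and $w_{\mu}$ are twice differentiable. The second-order necessary conditions at $(x',y')$ yield $Du^{\mu}(x') = D_x(\psi + q_{\delta})(x',y')$, $-Dw_{\mu}(y') = D_y(\psi + q_{\delta})(x',y')$, and the block Hessian bound
\[
\begin{pmatrix} D^2u^{\mu}(x') & 0 \\ 0 & -D^2w_{\mu}(y') \end{pmatrix} \,\le\, D^2(\psi + q_{\delta})(x',y').
\]

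Writing $X':=D^2u^{\mu}(x')$ and $Y':=D^2w_{\mu}(y')$, the sub- and supersolution properties of $u^{\mu}$ and $w_{\mu}$ give $G(x', Du^{\mu}(x'), X') \le 0 \le H(y', -Dw_{\mu}(y'), Y')$. The decisive step now is a purely matrix-algebraic lemma: from the block bound above with right-hand side close to $A := D^2\psi(\bar x, \bar y)$, one manufactures $X, Y\in S(d)$ with $X \ge X'$ and $Y \le Y'$, satisfying both $\mathrm{diag}(X,-Y) \le A + \varepsilon A^2$ and the lower bound $-(\varepsilon^{-1}+\|A\|)I \le \mathrm{diag}(X,-Y)$. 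Degenerate ellipticity of $G$ and $H$ then preserves the viscosity inequalities under the replacement $X'\mapsto X$, $Y'\mapsto Y$. Sending $\delta\to 0$, then $\mu\to 0$, and using continuity of $G,H$ together with $Du^{\mu}(x_{\mu}) \to D_x\psi(\bar x,\bar y)$ and $-Dw_{\mu}(y_{\mu}) \to -D_y\psi(\bar x,\bar y)$ yields the conclusion.

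The hard part is the final matrix-algebraic step, which is the technical heart of the Crandall--Ishii lemma. Passing from a generic upper bound on the block Hessian of a semiconvex/semiconcave pair to the quadratic bound $A+\varepsilon A^2$ requires diagonalizing $A$ and exploiting the scalar identity $a + \varepsilon a^2 = \varepsilon\bigl(a+(2\varepsilon)^{-1}\bigr)^2 - (4\varepsilon)^{-1}$ eigendirection by eigendirection, separating the positive and negative spectra to create the needed slack. A secondary technicality, that the regularized functions are only sub/supersolutions on slightly shrunken domains and for slightly perturbed equations, is purely local and does not interfere with the limiting procedure.
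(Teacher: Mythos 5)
Your statement is not proved in the paper at all: it is quoted verbatim as Theorem 3.2 of Crandall--Ishii--Lions' User's Guide, so what you are really attempting is a re-proof of the Crandall--Ishii theorem on sums. Your outline has the right ingredients (sup/inf-convolution, Alexandrov differentiability, Jensen's perturbation lemma), but it has a genuine gap exactly at the step you call decisive. From the Jensen point you only know
\[
-\tfrac{1}{\mu}I\;\le\;\begin{pmatrix} D^2u^{\mu}(x') & 0\\ 0 & -D^2w_{\mu}(y')\end{pmatrix}\;\le\;D^2(\psi+q_{\delta})(x',y'),
\]
and no soft ``matrix-algebraic lemma'' turns this into the required two-sided bound $-(\varepsilon^{-1}+\|A\|)I\le\operatorname{diag}(X,-Y)\le A+\varepsilon A^2$ with constants independent of $\mu$. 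If $\mu\ll\varepsilon$, the matrices $X'$, $-Y'$ may have eigenvalues near $-1/\mu$, far below $-(\varepsilon^{-1}+\|A\|)$, in directions unrelated to the spectrum of $A$; raising them while staying below $A+\varepsilon A^2$ in the coupled $2d\times 2d$ sense is precisely the content of the lemma, not a consequence of diagonalizing $A$ and completing squares eigendirection by eigendirection. Moreover your limiting order ``$\delta\to0$, then $\mu\to0$'' is self-defeating: once $\mu\to0$ the only lower bound $-\mu^{-1}I$ degenerates, so you lose the compactness of the Hessians needed to pass to the limit, and the upper bound you retain, $\operatorname{diag}(X,-Y)\le A+o(1)I$, does not imply $\le A+\varepsilon A^2$ (the slack $\varepsilon A^2$ vanishes on $\ker A$).

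The actual mechanism ties the convolution parameter to $\varepsilon$ and never sends it to zero. One first reduces to quadratic $\psi(x,y)=\tfrac12\langle A(x,y),(x,y)\rangle$ (the $C^2$ case follows by Taylor expansion plus a final limit that uses the uniform matrix bounds and the closedness of $\overline J^{2,\pm}$). Then one sup/inf-convolves with parameter $\lambda$ chosen so that $\lambda^{-1}=\varepsilon^{-1}+\|A\|$: the semiconvexity constant $-\lambda^{-1}I$ \emph{is} the lower bound in the conclusion, and the quadratic transforms under the convolution into one with Hessian $(I-\lambda A)^{-1}A$, which satisfies $(I-\lambda A)^{-1}A\le A+\varepsilon A^2$ exactly for this choice of $\lambda$ (check the scalar inequality $\tfrac{a}{1-\lambda a}\le a+\varepsilon a^2$ for eigenvalues $a$ of $A$). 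Only the Jensen perturbation is removed; the Alexandrov Hessians then converge, with bounds uniform in $\delta$, and the ``magic property'' of sup-convolutions places the limiting jets in $\overline J^{2,+}u(\overline x)$ and $\overline J^{2,-}w(\overline y)$, after which continuity of $G,H$ and degenerate ellipticity give the displayed viscosity inequalities. As written, your proposal asserts rather than proves this central step, so it does not yet constitute a proof; alternatively, you may simply cite \cite{Crandall-Ishii-Lions1992}, as the paper does.
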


When developing perturbation methods we will need compactness properties of the solutions. We will use 
\cite[Theorem 1.1]{imbert_silvestre_jems} which is stated below in a simplified form.

\begin{Proposition}[H\"older-continuity]\label{prop_mooney}
Let $u\in  C(B_1)$ be a bounded viscosity supersolution to 
\[
	\mathcal{P}^+_{\lambda,\Lambda}(D^2u)=-C_0\hspace{.2in}\mbox{in}\hspace{.2in}\left\lbrace|Du|>\gamma\right\rbrace
\]
and a bounded viscosity subsolution to
\[
	\mathcal{P}^-_{\lambda,\Lambda}(D^2u)=C_0\hspace{.2in}\mbox{in}\hspace{.2in}\left\lbrace|Du|>\gamma\right\rbrace,
\]
for some fixed $\gamma>0$, and $C_0\geq 0$. Then $u\in C^\beta_{\rm loc}(B_1)$ and for every $0<\tau<1$ there there exists $C>0$ such that
\[
	\left\|u\right\|_{ C^\beta(B_{\tau})}\,\leq\,C.
\]
The constant $\beta$ depends only on $d$, $\lambda$ and $\Lambda$, and $C$ depends only on $d$, $\lambda$, $\Lambda$, $\gamma$, $\left\|u\right\|_{L^\infty(B_1)}$, $C_0$ and $\tau$.
\end{Proposition}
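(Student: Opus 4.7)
The plan is to reduce the claim to a single universal oscillation-decay step at unit scale, and then to adapt the Krylov--Safonov machinery to the fact that the Pucci inequalities only hold on the open set $\{|Du|>\gamma\}$. The two hypotheses behave well under the rescaling $v(x):=u(\theta x)/\theta^\be$: if $u$ satisfies them with parameters $(\gamma,C_0)$, then $v$ satisfies them with parameters $(\gamma\,\theta^{1-\be},\,C_0\,\theta^{2-\be})$, both of which are smaller than $(\gamma,C_0)$ provided $\be<1$. Consequently, it suffices to exhibit some $\theta,\mu\in(0,1)$ and $\gamma_0,C_0^{(0)}>0$, depending only on $d,\lambda,\Lambda$, such that whenever $\osc_{B_1}u\le 1$, $\gamma\le\gamma_0$, and $C_0\le C_0^{(0)}$, one has $\osc_{B_\theta}u\le\mu=\theta^\be$. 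Iterating this step on dyadic balls and undoing the rescalings then delivers $u\in C^\be_{\mathrm{loc}}(B_1)$ with the claimed dependencies.

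The heart of the argument is a point-measure estimate in the spirit of \cite{mooney}: if $u\ge 0$ in $B_1$ with $\inf_{B_{1/2}}u\le 1$, and the hypotheses hold with $\gamma,C_0$ sufficiently small, then $|\{u\le M\}\cap B_{1/2}|\ge \mu_0$ for constants $M,\mu_0>0$ depending only on $d,\lambda,\Lambda$. To obtain it, I would slide concave paraboloids $\Phi_y(x):=-A|x-y|^2+c_y$ from below, with vertices $y$ ranging over an annular region $E\subset B_{3/4}$ that is bounded away from an approximate minimum point of $u$, increasing $c_y$ until $\Phi_y\le u$ with equality at some contact point $x_0=x_0(y)$. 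At $x_0$ one has $Du(x_0)=-2A(x_0-y)$, so the annular choice of $E$ guarantees $|Du(x_0)|\ge 2A\de$ for some fixed $\de>0$; choosing $A$ large enough, the contact point lies in $\{|Du|>\gamma\}$ and the subsolution inequality applies, giving $\mathcal{P}^-(D^2u(x_0))\le C_0$. Combined with the viscosity lower bound $D^2u(x_0)\ge -2AI$ coming from the touching, this yields a two-sided Hessian bound at $x_0$, from which an ABP-style change-of-variables along the contact map $y\mapsto x_0$ produces the desired lower bound on $|\{u\le M\}|$.

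With the measure estimate in hand, the classical Calder\'on--Zygmund dyadic-cube machinery propagates it across scales to yield an $L^\ep$ estimate for supersolutions; the symmetric argument applied to subsolutions gives the corresponding bound, and combining the two produces a weak Harnack inequality. The Krylov--Safonov oscillation-halving trick then supplies the missing oscillation-decay step and closes the induction.

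The main obstacle is engineering the touching step so that every contact point falls inside $\{|Du|>\gamma\}$ while simultaneously satisfying the injectivity and Jacobian-control hypotheses of ABP: one must choose $E$, $A$, and the opening of the paraboloids so that the contact map is essentially one-to-one, has a controlled Jacobian, and its image remains in the region where the differential inequality is available. The rescaling observation above is precisely what makes this uniform across scales: at the $k$-th level of the iteration, the effective threshold $\gamma\,\theta^{k(1-\be)}$ is negligible compared to $A\de$, so the touching argument never ``sees'' the exclusion set and the constants in the measure estimate stay universal.
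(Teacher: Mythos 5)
You should first note that the paper does not prove this proposition at all: it is quoted (in simplified form) from \cite[Theorem 1.1]{imbert_silvestre_jems}, so you are attempting to reconstruct a known exterior result, and your outline follows the sliding-paraboloid strategy of \cite{mooney} rather than the cusp argument of \cite{imbert_silvestre_jems}. That choice is legitimate in principle, and your scaling reduction ($v(x)=u(\theta x)/\theta^\be$ turning $(\gamma,C_0)$ into $(\gamma\theta^{1-\be},C_0\theta^{2-\be})$, plus normalization by $\|u\|_{L^\infty}$, $\gamma$, $C_0$ to reach the small regime) is fine. The problem is the central measure estimate, where your argument has a genuine gap.

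The claim that choosing vertices $y$ in an annulus $E$ bounded away from an approximate minimum point forces the contact point $x_0$ to satisfy $|x_0-y|\geq\de$, hence $|Du(x_0)|\geq 2A\de>\gamma$, is false. The hypotheses are vacuous wherever test gradients are at most $\gamma$, so for instance $u\equiv 0$, or any $u$ with a second low spot near the chosen vertex, satisfies them; in that case the function $x\mapsto u(x)+A|x-y|^2$ is minimized at (or arbitrarily near) the vertex itself, the contact gradient is $0<\gamma$, and no placement of $E$ relative to the global approximate minimum can prevent this, since nothing forces contact to occur near where $\inf_{B_{1/2}}u$ is attained rather than near $y$. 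The two known proofs resolve exactly this point by other means: Imbert--Silvestre touch with cusps whose slope is bounded below by a universal constant on the whole ball, so every contact point automatically lies in $\{|Du|>\gamma\}$ once $\gamma$ is small after normalization; Mooney keeps paraboloids but argues by dichotomy --- if the contact point lies within $\gamma/(2A)$ of the vertex, then the vertex itself already belongs to the sublevel set $\{u\leq M\}$ and contributes directly to its measure, while for the remaining vertices the equation is available and the ABP/contact-map estimate applies. Your annulus device must be replaced by one of these mechanisms; as written, the point-to-measure estimate, and hence the weak Harnack inequality and the oscillation decay built on it, is unproven. A secondary issue: the contact-map Jacobian argument in \cite{mooney} is carried out for strong $W^{2,d}$ solutions, whereas here $u$ is only a continuous viscosity solution, so the pointwise Hessian bound and change of variables at contact points require an extra regularization step (inf-convolution together with Jensen-type lemmas, or the viscosity ABP machinery), which your sketch does not address; the argument of \cite{imbert_silvestre_jems} is already formulated in the viscosity setting and avoids this.
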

The above proposition implies H\"older regularity of viscosity solutions to variants of \eqref{eq_cviscsub}-\eqref{eq_cviscsuper}. Indeed, consider $u\in { C}(B_1)$ which is a viscosity subsolution to
\begin{equation}\label{eq_cil1}
	\min\left\lbrace \left|q+Du\right|^{\theta_2}F(D^2u),\,F(D^2u) \right\rbrace = 1
\end{equation}
and a viscosity supersolution to
\begin{equation}\label{eq_cil2}
	\max\left\lbrace \left|q+Du\right|^{\theta_2}F(D^2u),\,F(D^2u) \right\rbrace\ =-1
\end{equation}
in the unit ball $B_1$, where $q\in\mathbb{R}^d$ is an arbitrary vector.
Let $A_0>1$ be such that $|q|<A_0$. Then since $|q+p|>A_0>1$ if $|p|>2A_0$, it is easy to see that $u$ is a viscosity subsolution to
\[
	F(D^2u)=0\hspace{.4in}\mbox{in}\hspace{.1in}\{|Du|>2A_0\}.
\]

As a consequence, in the set $\{|Du|>2A_0\}$, $u$ is a viscosity subsolution to
\[
	\mathcal{P}^-_{\lambda,\Lambda}(D^2u)=0.	
\]
Similarly we obtain that in the set $\{|Du|>2A_0\}$, $u$ is a viscosity supersolution to
\[
	\mathcal{P}^+_{\lambda,\Lambda}(D^2u)=0.
\]
A straightforward application of Proposition \ref{prop_mooney} thus leads to the following corollary.

\begin{Corollary}\label{prop_hrm}
Let $u\in { C}(B_1)$ be a viscosity subsolution to
\eqref{eq_cil1}
and a viscosity supersolution to
\eqref{eq_cil2}. Let Assumptions A\ref{assump_Felliptic}, A\ref{assump_theta} hold and let $\|u\|_{L^\infty(B_1)}\leq 1$. Suppose further that $|q|<A_0$, for some fixed constant $A_0>1$. Then $u\in C^\beta_{\rm loc}(B_1)$ for some $\beta\in(0,1)$ depending only on $d$, $\lambda$, 
$\Lambda$. In addition, for every $0<\tau<1$, there exists $C>0$ such that
\begin{equation}\label{eq:cbeta}
	\left\|u\right\|_{ C^\beta(B_{\tau})}\,\leq\,C
\end{equation}
with $C=C\left(d,\lambda,\Lambda,A_0,\tau\right)$.
\end{Corollary}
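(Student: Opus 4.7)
The plan is to invoke Proposition \ref{prop_mooney} after performing the reduction sketched in the paragraph just above the corollary. First I would fix a test function $\varphi \in C^2$ touching $u$ from above at some $x_0 \in B_1$ with $|D\varphi(x_0)| > 2A_0$. Since $|q| < A_0$, the triangle inequality gives $|q + D\varphi(x_0)| > A_0 > 1$, so $b := |q + D\varphi(x_0)|^{\theta_2} > 1$. Writing $a := F(D^2\varphi(x_0))$, the subsolution property \eqref{eq_cil1} reads $\min(ab, a) \leq 1$; a short case analysis ($a \leq 0$ makes the inequality trivial, while $a > 0$ combined with $b > 1$ forces $\min(ab, a) = a$) yields $F(D^2\varphi(x_0)) \leq 1$. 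Symmetrically, a test function $\varphi$ touching $u$ from below at $x_0$ with $|D\varphi(x_0)| > 2A_0$ satisfies $F(D^2\varphi(x_0)) \geq -1$ by \eqref{eq_cil2}.

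Next I would use the Pucci sandwich $\mathcal{P}^-_{\lambda,\Lambda}(M) \leq F(M) \leq \mathcal{P}^+_{\lambda,\Lambda}(M)$, which is immediate from Assumption A\ref{assump_Felliptic} together with $F(0) = 0$, to upgrade the $F$-inequalities above to extremal ones. This gives that $u$ is a viscosity subsolution to $\mathcal{P}^-_{\lambda,\Lambda}(D^2 u) = 1$ and a viscosity supersolution to $\mathcal{P}^+_{\lambda,\Lambda}(D^2 u) = -1$, both restricted to the set $\{|Du| > 2A_0\}$, in the usual sense that the relevant inequality holds at every admissible $C^2$-test function whose gradient at the contact point lies in that set.

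With those two extremal inequalities in place, Proposition \ref{prop_mooney} applies verbatim with $\gamma := 2A_0$ and $C_0 := 1$. It produces the H\"older exponent $\beta \in (0,1)$ depending only on $d$, $\lambda$, $\Lambda$, together with the estimate $\|u\|_{C^\beta(B_\tau)} \leq C$; tracking the dependencies listed there and using $\|u\|_{L^\infty(B_1)} \leq 1$ collapses the constant to $C = C(d,\lambda,\Lambda, A_0, \tau)$, which is exactly \eqref{eq:cbeta}.

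The only step that requires a little care is the viscosity case analysis on the $\min$ and $\max$ operators: one must verify that the Pucci inequalities genuinely hold against every admissible test function whose gradient lies in $\{|Du| > 2A_0\}$, and not merely at points where $u$ happens to be smooth. This is routine, and the translation invariance of the argument in $q$ (which enters only through the threshold $2A_0$) is what makes the constant in \eqref{eq:cbeta} independent of $q$ beyond its bound $A_0$.
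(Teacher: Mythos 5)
Your proof is correct and follows essentially the same route as the paper: reduce \eqref{eq_cil1}--\eqref{eq_cil2} on the set $\{|Du|>2A_0\}$ to Pucci inequalities and invoke Proposition \ref{prop_mooney} with $\gamma=2A_0$. Your case analysis is in fact slightly more careful than the paper's sketch (which asserts subsolution to $F(D^2u)=0$ where the correct right-hand side is $1$), but this changes nothing since Proposition \ref{prop_mooney} allows any $C_0\geq 0$ and the resulting constant still depends only on $d,\lambda,\Lambda,A_0,\tau$.
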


We recall the standard $ C^{1,\alpha_0}_{\rm loc}$-regularity result for solutions to $F=0$, see e.g. \cite[Corollary 5.7]{Caffarelli-Cabre1995}. 

\begin{Proposition}\label{th:alpha0}
Let $F$ satisfy Assumption A\ref{assump_Felliptic} and let $h\in C(B_1)$ be a viscosity solution to
\[
	F(D^2h)\,=\,0\hspace{.4in}\mbox{in}\hspace{.1in}B_1.
\]
Then $h\in C^{1,\alpha_0}_{\rm loc}(B_1)$, for some universal constant $\alpha_0\in(0,1)$. Furthermore, there exists $C>0$ depending only on $d$, $\lambda$ and $\Lambda$, such that
\[
	\left\|h\right\|_{ C^{1,\alpha_0}(B_{1/2})}\,\leq\,C\left\|h\right\|_{L^\infty(B_{3/4})}.
\]
\end{Proposition}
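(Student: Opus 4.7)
My plan is to follow the classical Caffarelli--Cabr\'e compactness-plus-iteration argument for viscosity solutions of uniformly elliptic fully nonlinear equations. By rescaling, it suffices to prove a pointwise $C^{1,\alpha_0}$ estimate at the origin for $h$ satisfying $F(D^2h)=0$ in $B_1$ with $\|h\|_{L^\infty(B_1)}\leq 1$; covering $B_{1/2}$ by balls of controlled radius then delivers the stated norm bound.

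The central ingredient is an \emph{affine approximation lemma}: there exist universal constants $\rho\in(0,1)$, $\alpha_0\in(0,1)$, $C>0$, depending only on $d,\lambda,\Lambda$, such that whenever $F(D^2h)=0$ in $B_1$ with $\|h\|_{L^\infty(B_1)}\leq 1$ there is an affine function $\ell$ with $|\ell(0)|+|D\ell|\leq C$ satisfying
\[
\|h-\ell\|_{L^\infty(B_\rho)}\leq \rho^{1+\alpha_0}.
\]
I would prove this by a contradiction/compactness argument. If the lemma fails, one produces a sequence $h_n$ of normalized solutions remaining far from every admissible affine function at scale $\rho$ for all choices of the parameters. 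Krylov--Safonov interior H\"older estimates — which apply because any solution of $F=0$ is simultaneously a viscosity sub- and supersolution to the extremal Pucci equations $\mathcal{P}^\mp_{\lambda,\Lambda}(D^2\cdot)=0$ — force equicontinuity of $\{h_n\}$, hence precompactness in $C(\overline{B_{1/2}})$. Passing to a uniform limit and using stability of viscosity solutions, the limit $h_\infty$ solves $F(D^2h_\infty)=0$; interior regularity for this constant-coefficient fully nonlinear equation (via ABP and Harnack) supplies an affine tangent approximation of $h_\infty$ of the required quality, contradicting the inherited defect.

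With the approximation lemma in hand, the iteration is routine. Setting $r_k=\rho^k$, one constructs affine functions $\ell_k$ satisfying $\|h-\ell_k\|_{L^\infty(B_{r_k})}\leq r_k^{1+\alpha_0}$ by inductively applying the lemma to the normalized rescalings
\[
h_k(x):=r_k^{-(1+\alpha_0)}\bigl(h(r_k x)-\ell_k(r_k x)\bigr),
\]
which solve an equation of the same form thanks to the scaling invariance of $F(D^2\cdot)=0$ and remain bounded by $1$ in $B_1$. The differences of consecutive gradients decay geometrically, $|D\ell_{k+1}-D\ell_k|\lesssim \rho^{k\alpha_0}$, so $\{D\ell_k\}$ is Cauchy; its limit defines $Dh(0)$, and summing the resulting geometric series yields $\|h-\ell_\infty\|_{L^\infty(B_r)}\leq C r^{1+\alpha_0}$ for all small $r$, which is the desired pointwise $C^{1,\alpha_0}$ estimate at the origin.

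The main obstacle is the approximation lemma: it depends essentially on Assumption A\ref{assump_Felliptic}, which powers both the Krylov--Safonov compactness (through ABP) and the interior regularity of the limit solution. The scaling invariance used in the iteration is also crucial but is immediate, since $F$ acts only on the Hessian; the constant $\alpha_0$ is determined, in the end, by the universal exponent in the Krylov--Safonov Harnack inequality together with the gain from the compactness step.
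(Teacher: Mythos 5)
First, note that the paper does not prove this proposition at all: it is quoted as the classical interior $C^{1,\alpha_0}$ estimate for $F(D^2h)=0$ and attributed to \cite[Corollary 5.7]{Caffarelli-Cabre1995}, so any proof you give should in effect reconstruct that result rather than lean on it.

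Your argument has a genuine gap, and it is the key step: the compactness proof of the affine approximation lemma is circular. After extracting the uniform limit $h_\infty$, which again solves $F(D^2h_\infty)=0$, you claim that ``interior regularity \ldots (via ABP and Harnack) supplies an affine tangent approximation of $h_\infty$ of the required quality.'' But ABP and the Harnack inequality only give the Krylov--Safonov estimate $h_\infty\in C^{\beta}_{\rm loc}$ for some small universal $\beta$; this yields an oscillation decay of order $\rho^{\beta}$ around a \emph{constant}, not an error of order $\rho^{1+\alpha_0}$ around an affine function. Producing an affine approximation with error $\rho^{1+\alpha_0}$ at scale $\rho$ requires $h_\infty$ to be differentiable with a quantitative $C^{1,\alpha_0}$ modulus --- which is exactly the conclusion of the proposition you are proving. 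This is precisely why the compactness-plus-iteration scheme is legitimate later in the paper (Propositions \ref{prop_approx}--\ref{prop_oscdiscrete}): there the limit solves the \emph{clean} equation $\overline F=0$ and Proposition \ref{th:alpha0} is available as a black box; it cannot be used to establish the black box itself. The standard non-circular route, and the one behind the cited Corollary 5.7, exploits translation invariance of $F(D^2\cdot)=0$: for a unit vector $e$ and small $h$, uniform ellipticity puts the incremental quotients $\bigl(u(x+he)-u(x)\bigr)/|h|^{\gamma}$ in the Pucci class, i.e.\ they satisfy $\mathcal{P}^{+}_{\lambda,\Lambda}\geq 0\geq \mathcal{P}^{-}_{\lambda,\Lambda}$ in the viscosity sense, so the Krylov--Safonov $C^{\beta}$ estimate applies to them uniformly in $h$; iterating via the incremental-quotient lemma \cite[Lemma 5.6]{Caffarelli-Cabre1995} raises the regularity from $C^{\beta}$ to $C^{1,\alpha_0}$ in finitely many steps, with the stated estimate following by the usual normalization and covering. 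Your outer iteration (Step on dyadic scales $r_k=\rho^k$) and the reduction to a pointwise estimate are fine once a correct approximation lemma is in hand, but as written the source of regularity for the limit is missing.
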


%

\subsection{Scaling properties}\label{subsec_smallness}

In this section we examine scaling properties of equations \eqref{eq_cviscsub}
and \eqref{eq_cviscsuper}. Similar properties apply to \eqref{eq_main}. We only discuss scaling about the origin but the procedure can be obviously done about every point with obvious adjustments.

Suppose $u$ is a viscosity subsolution to
\eqref{eq_cviscsub}
and a viscosity supersolution to
\eqref{eq_cviscsuper}
in $B_1$ but in fact we only require that \eqref{eq_cviscsub}-\eqref{eq_cviscsuper} be satisfied in $B_r$ for some $r>0$.
We define for $K>0$, 
\[
	v(x)\,:=\,\frac{u(rx)}{K}.
\]
A straightforward computation implies that in particular $v$ is a viscosity subsolution to 
\[
\min\left\lbrace |Dv|^{\theta_2}\overline F(D^2v),\,\overline F(D^2v)\right\rbrace
=
	\overline C_0\quad\mbox{in}\hspace{.1in}B_{1}
\]
and a viscosity supersolution to
\[
\max\left\lbrace |Dv|^{\theta_2}\overline F(D^2v),\,\overline F(D^2v)\right\rbrace
=
	-\overline C_0\quad\mbox{in}\hspace{.1in}B_{1},
\]
where 
\[
	\overline{F}(M)\,:=\,\frac{r^2}{K}F\left(\frac{K}{r^2}M\right)
\]
and 
\[
	\overline C_0=C_0\max\left(\frac{r^{2+\theta_2}}{K^{1+\theta_2}},\frac{r^2}{K}\right).
\]
Choosing 
\[
	K\,:=\,\left[\left\|u\right\|_{L^\infty(B_1)}\,+\,\max\left\lbrace C_0,C_0^\frac{1}{1+\theta_2}\right\rbrace\right]
\]
and setting $r:=\varepsilon<1$ we obtain
$\|v\|_{L^\infty(B_1)}\leq 1$ and $\overline C_0\leq \varepsilon$. Thus by this kind of scaling we can always assume that viscosity subsolutions/supersolutions $u$ of \eqref{eq_cviscsub}/\eqref{eq_cviscsuper} satisfy $\|u\|_{L^\infty(B_1)}\leq 1$ and $C_0\leq 1$ or $C_0$ is arbitrarily
small.

\section{Existence of solutions}\label{sec_existence}

Next we prove the existence of a viscosity solution to \eqref{eq_deg0} with the required properties. We start by considering an approximating problem and establishing a comparison principle. Let $v\in  C(\overline\Omega)$. For $0<\varepsilon<1$, define the function $g_\varepsilon^v$ as 
\[
	g_\varepsilon^v:=\max\left(\min\left(\frac{v+\varepsilon}{2\varepsilon},1\right),0\right)\quad\mbox{on}\,\,\Omega
\]
and $g_\varepsilon^v=0$ on $\mathbb{R}\setminus\Omega$. Let $\eta_\varepsilon(\cdot)$ be the standard mollifier and consider $h_\varepsilon^v(x):=(g_\varepsilon^v\ast\eta_\varepsilon^v)(x)$, for $x\in \Omega$. Finally, we define the exponent function $\theta_\varepsilon^v:\Omega\to\mathbb{R}$ by setting
\[
	\theta_\varepsilon^v(x):=\theta_1h_\varepsilon^v(x)+(1-h_\varepsilon^v(x))\theta_2.
\]

Notice that $\theta_1\leq \theta_\varepsilon^v\leq \theta_2$. We consider the family of equations
\begin{equation}\label{eq_deg1}
	\left(\varepsilon+|Du|\right)^{\theta_v^\varepsilon(x)}\left[\varepsilon u+F(D^2u)\right]=f(x)\hspace{.4in}\mbox{in}\hspace{.1in}\Omega,
\end{equation}
and prove a comparison principle for its sub and supersolutions.

\begin{Proposition}[Comparison principle]\label{prop_comparison}
Let $\Omega$ be a bounded domain, $F$ be degenerate elliptic and $f\in C(\overline{\Omega})$. Let $u\in USC(\overline\Omega)$ be a viscosity subsolution to \eqref{eq_deg1} and $w\in LSC(\overline\Omega)$ be a viscosity supersolution to \eqref{eq_deg1}. Suppose $u\leq w$ on $\partial\Omega$. Then, $u\leq w$ in $\overline{\Omega}$.
\end{Proposition}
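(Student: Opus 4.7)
The plan is to argue by contradiction using the standard doubling-of-variables technique of Crandall-Ishii-Lions. The structural feature that makes comparison work for \eqref{eq_deg1} is that the strictly positive factor $(\varepsilon + |p|)^{\theta_\varepsilon^v(x)}$ multiplies $\varepsilon u$ inside the brackets, making the equation properly monotone in $u$; this will furnish the strict gap driving the contradiction. Note also that $\theta_\varepsilon^v$ is continuous on $\Omega$ (being a mollified function) and is the \emph{same} in both the sub- and super-solution inequalities since $v$ is a fixed function.

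Suppose by contradiction that $M^* := \max_{\overline\Omega}(u-w) > 0$. Because $u \le w$ on $\partial\Omega$, this maximum is attained at some $\hat x$ in the interior of $\Omega$. For $\alpha > 0$, set
\[
\Phi_\alpha(x,y) := u(x) - w(y) - \frac{\alpha}{2}|x-y|^2,
\]
and let $(x_\alpha, y_\alpha)$ be a maximizer of $\Phi_\alpha$ on $\overline\Omega \times \overline\Omega$. Standard arguments give $x_\alpha, y_\alpha \to \hat x$, $\alpha|x_\alpha-y_\alpha|^2 \to 0$, $u(x_\alpha) - w(y_\alpha) \to M^*$, and $(x_\alpha, y_\alpha) \in \Omega \times \Omega$ for all large $\alpha$.

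I would then invoke Proposition \ref{prop_cil} with $\psi(x,y) := \frac{\alpha}{2}|x-y|^2$ to produce matrices $X, Y \in S(d)$ with $X \le Y$ (hence $F(X) \ge F(Y)$ by Assumption A\ref{assump_Felliptic}) and a common gradient $p_\alpha := \alpha(x_\alpha - y_\alpha)$ entering both viscosity inequalities. Setting
\[
a_\alpha := (\varepsilon + |p_\alpha|)^{\theta_\varepsilon^v(x_\alpha)}, \qquad b_\alpha := (\varepsilon + |p_\alpha|)^{\theta_\varepsilon^v(y_\alpha)},
\]
both of which are bounded below by $\varepsilon^{\theta_2} > 0$, I would divide each viscosity inequality by its coefficient and subtract to obtain
\[
\varepsilon\bigl(u(x_\alpha) - w(y_\alpha)\bigr) + F(X) - F(Y) \,\le\, \frac{f(x_\alpha)}{a_\alpha} - \frac{f(y_\alpha)}{b_\alpha}.
\]

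The final step is to show that the right-hand side vanishes (along a subsequence). If $|p_\alpha| \to \infty$, both $a_\alpha$ and $b_\alpha$ blow up while $f$ is bounded, so the conclusion is immediate. If $|p_\alpha|$ stays bounded, then $\varepsilon + |p_\alpha|$ lies in a compact subset of $(0,\infty)$, and the mean value theorem applied to the map $t \mapsto (\varepsilon + |p_\alpha|)^t$, combined with the continuity of $\theta_\varepsilon^v$ (so that $|\theta_\varepsilon^v(x_\alpha) - \theta_\varepsilon^v(y_\alpha)| \to 0$) and the uniform continuity of $f$, again drives the right-hand side to zero. Using $F(X) - F(Y) \ge 0$ and passing to the limit gives $\varepsilon M^* \le 0$, the desired contradiction. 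The one point requiring mild care is precisely this last estimate, since a priori we have no control on $|p_\alpha|$; the rest of the argument is routine.
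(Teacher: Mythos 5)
Your proof is correct and follows essentially the same architecture as the paper's: argue by contradiction, double variables with a quadratic penalization, invoke Proposition \ref{prop_cil} to get a common gradient $p_\alpha=\alpha(x_\alpha-y_\alpha)$ and matrices $X\le Y$ (hence $F(X)\ge F(Y)$ by degenerate ellipticity alone -- you do not even need the full Assumption A\ref{assump_Felliptic} here), divide each viscosity inequality by its strictly positive degenerate factor, and let the zeroth-order term $\varepsilon(u-w)$ produce the contradiction. The only divergence is in the closing estimate, which you rightly flag as the delicate step. The paper handles it with a single quantitative bound: writing the two factors as exponentials and using that $\theta_\varepsilon^v$ is Lipschitz (constant $C_2$), the lower bound $\varepsilon^{\theta_2}$, and $|\ln(\varepsilon+r)|\le|\ln\varepsilon|+r$, it bounds the right-hand side by $\omega(|x_\delta-y_\delta|)\varepsilon^{-\theta_2}+C_1\varepsilon^{-\theta_2}C_2|x_\delta-y_\delta|\bigl(|\ln\varepsilon|+\tfrac{|x_\delta-y_\delta|}{\delta}\bigr)$, which vanishes because $|x_\delta-y_\delta|^2/\delta\to 0$; this treats bounded and unbounded $|p_\delta|$ at once, with no subsequence extraction. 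Your dichotomy is a valid and somewhat softer alternative: when $|p_\alpha|\to\infty$ you only need $\theta_\varepsilon^v\ge\theta_1>0$ to force both denominators to blow up, and when $|p_\alpha|$ stays bounded you only need uniform continuity of $\theta_\varepsilon^v$ (which holds, since it is a mollification, in fact Lipschitz) and of $f$, via the mean value theorem on $t\mapsto(\varepsilon+|p_\alpha|)^t$. Either route closes the argument; the paper's is a unified quantitative estimate, yours trades that for a case split that avoids the logarithm bookkeeping.
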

\begin{proof}
If the statement is false, we have $\max_{x\in\overline{\Omega}}(u-w)(x)=:\tau>0$.  For $\delta>0$ we define $\Phi_\delta:\overline{\Omega}\times\overline{\Omega}\to\mathbb{R}$ as
\[
	\Phi_\delta(x,y):=u(x)-w(y)-\frac{|x-y|^2}{2\delta}.
\]
Let $(x_\delta,y_\delta)\in\overline{\Omega}\times\overline{\Omega}$ be such that
\[
	\max_{x,y\in\overline{\Omega}}\Phi_\delta(x,y)=\Phi_\delta(x_\delta,y_\delta)\geq \tau.
\]
We know (see Lemma 3.1 of \cite{Crandall-Ishii-Lions1992}) that 
\begin{equation}\label{eq:a1}
\lim_{\delta\to 0}\frac{|x_\delta-y_\delta|^2}{\delta}=0
\end{equation}
and thus, for small $\delta$, we have $x_\delta,y_\delta\in\Omega$.
From Theorem 3.2 of \cite{Crandall-Ishii-Lions1992} (see also Proposition \ref{prop_cil}), there exist $X,Y\in S(d)$ such that
\[
	\left(\frac{x_\delta-y_\delta}{\delta},X\right)\in\overline{J}^{2,+}u(x_\delta)\hspace{.2in}\mbox{and}\hspace{.2in}\left(\frac{x_\delta-y_\delta}{\delta},Y\right)\in\overline{J}^{2,-}w(y_\delta),
\]
with
\begin{equation}\label{eq_deg3}
	-\frac{3}{\delta}
		\begin{pmatrix}
		I& 0 \\
		0& I
		\end{pmatrix}
			\leq 
		\begin{pmatrix}
		X&0\\
		0&-Y
		\end{pmatrix}
			\leq
	\frac{3}{\delta}
		\begin{pmatrix}
		I& -I \\
		-I& I
		\end{pmatrix},	
\end{equation}
where $I$ is the identity matrix. Inequality \eqref{eq_deg3} implies $X\leq Y$ and, as a consequence of the degenerate ellipticity of $F$, we thus have
for sufficiently small $\delta$,
\begin{equation}\label{eq_deg2}
\frac{\varepsilon\tau}{2}\leq\varepsilon\left(u(x_\delta)-w(y_\delta)\right)\leq \frac{f(x_\delta)}{\left(\varepsilon+\frac{|x_\delta-y_\delta|}{\delta}\right)^{\theta_v^\varepsilon(x_\delta)}}-\frac{f(y_\delta)}{\left(\varepsilon+\frac{|x_\delta-y_\delta|}{\delta}\right)^{\theta_v^\varepsilon(y_\delta)}}.
\end{equation}

Let $|f(x)|\leq C_1$ for all $x\in \Omega$ and let $\omega$ be a modulus of continuity of $f$ on $\overline{\Omega}$. We notice that
\[
\min\left(\left(\varepsilon+\frac{|x_\delta-y_\delta|}{\delta}\right)^{\theta_v^\varepsilon(x_\delta)},\left(\varepsilon+\frac{|x_\delta-y_\delta|}{\delta}\right)^{\theta_v^\varepsilon(y_\delta)}\right)\geq \varepsilon^{\theta_2},
\]
\[
\max\left(-\theta_v^\varepsilon(x_\delta)\ln\left(\varepsilon+\frac{|x_\delta-y_\delta|}{\delta}\right), -\theta_v^\varepsilon(y_\delta)\ln\left(\varepsilon+\frac{|x_\delta-y_\delta|}{\delta}\right)\right)\leq -\theta_2\ln\varepsilon.
\]
Let $C_2$ be the Lipschitz constant of the function $\theta_v^\varepsilon(x)$ and recall that $|\ln (\varepsilon+r)|\leq |\ln\varepsilon|+r$ for $r\geq 0$.
Then
\[
\begin{split}
	&\frac{f(x_\delta)}{\left(\varepsilon+\frac{|x_\delta-y_\delta|}{\delta}\right)^{\theta_v^\varepsilon(x_\delta)}}-\frac{f(y_\delta)}{\left(\varepsilon+\frac{|x_\delta-y_\delta|}{\delta}\right)^{\theta_v^\varepsilon(y_\delta)}}
	\leq 
	\frac{f(x_\delta)-f(y_\delta)}{\left(\varepsilon+\frac{|x_\delta-y_\delta|}{\delta}\right)^{\theta_v^\varepsilon(x_\delta)}}
	\\
	&
	+f(y_\delta)\left(\frac{1}{\left(\varepsilon+\frac{|x_\delta-y_\delta|}{\delta}\right)^{\theta_v^\varepsilon(x_\delta)}}
	-\frac{1}{\left(\varepsilon+\frac{|x_\delta-y_\delta|}{\delta}\right)^{\theta_v^\varepsilon(y_\delta)}}\right)
	\\
	&
	\leq
	\omega(|x_\delta-y_\delta|)\varepsilon^{-\theta_2}+C_1\left|e^{-\theta_v^\varepsilon(x_\delta)\ln\left(\varepsilon+\frac{|x_\delta-y_\delta|}{\delta}\right)}
	-e^{-\theta_v^\varepsilon(y_\delta)\ln\left(\varepsilon+\frac{|x_\delta-y_\delta|}{\delta}\right)}\right|
	\\
	&
	\leq
	\omega(|x_\delta-y_\delta|)\varepsilon^{-\theta_2}+C_1e^{-\theta_2\ln\varepsilon}|\theta_v^\varepsilon(x_\delta)-\theta_v^\varepsilon(y_\delta)|\left|\ln\left(\varepsilon+\frac{|x_\delta-y_\delta|}{\delta}\right)\right|
	\\
	&
	\leq 
	\omega(|x_\delta-y_\delta|)\varepsilon^{-\theta_2}+C_1\varepsilon^{-\theta_2}C_2|x_\delta-y_\delta|\left(|\ln\varepsilon|+\frac{|x_\delta-y_\delta|}{\delta}\right).
\end{split}
\]
Therefore, letting $\delta\to 0$ in \eqref{eq_deg2} and using \eqref{eq:a1}, we obtain $\frac{\varepsilon\tau}{2}\leq 0$, which is a contradiction.
\end{proof}

Once the comparison principle is available for \eqref{eq_deg1}, we examine the existence of viscosity solutions for this equation. To use Perron's method, we construct continuous viscosity sub and supersolutions to \eqref{eq_deg1}, agreeing with $g$ on the boundary $\partial\Omega$.

\begin{Lemma}[Existence of global sub and supersolutions]\label{lem_existepsilon}
Let $\Omega$ be a bounded domain which satisfies a uniform exterior sphere condition. Let Assumptions A\ref{assump_Felliptic} and A\ref{assump_theta} hold and let $f\in C(\overline{\Omega}),g\in C(\partial\Omega)$. Then there exist a viscosity subsolution $\underline{w}\in C(\overline{\Omega})$ to \eqref{eq_deg1} and a viscosity supersolution $\overline{w}\in C(\overline{\Omega})$ to \eqref{eq_deg1} for every $0<\varepsilon<1$ and $v\in  C(\overline\Omega)$, such that
$\underline{w}=\overline{w}=g$ on $\partial\Omega$.
\end{Lemma}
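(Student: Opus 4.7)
The plan is to apply Perron's method: since Proposition~\ref{prop_comparison} furnishes the comparison principle for \eqref{eq_deg1}, it suffices to construct continuous viscosity sub- and supersolutions $\underline w,\,\overline w\in C(\overline\Omega)$ of \eqref{eq_deg1} agreeing with $g$ on $\partial\Omega$. I construct $\overline w$; $\underline w$ is obtained symmetrically. A useful preliminary reduction is that, because $\varepsilon\in(0,1)$ and $\theta_\varepsilon^v\leq\theta_2$, one has $(\varepsilon+|Du|)^{\theta_\varepsilon^v(x)}\geq\varepsilon^{\theta_2}>0$ pointwise, so any $\phi\in C^2(\Omega)$ satisfying $\varepsilon\phi+F(D^2\phi)\geq\varepsilon^{-\theta_2}\|f\|_{L^\infty(\Omega)}$ in $\Omega$ is automatically a classical supersolution of \eqref{eq_deg1}.

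The building block is the exterior-sphere barrier. Using the uniform exterior sphere condition, for each $y\in\partial\Omega$ fix $z_y$ with $\overline{B_R(z_y)}\cap\overline\Omega=\{y\}$, where $R>0$ is independent of $y$. Pick $\gamma>\frac{(d-1)\Lambda}{\lambda}-1$ and set
\[
\psi_y(x) := R^{-\gamma} - |x-z_y|^{-\gamma}.
\]
Then $\psi_y\geq 0$ on $\overline\Omega$ with $\psi_y(y)=0$, and computing the eigenvalues of $D^2\psi_y$ in the radial/tangential frame at $x$ (one negative radial eigenvalue $-\gamma(\gamma+1)|x-z_y|^{-\gamma-2}$; $d-1$ positive tangential eigenvalues $\gamma|x-z_y|^{-\gamma-2}$) yields
\[
\mathcal{P}^-_{\lambda,\Lambda}(D^2\psi_y)=\gamma|x-z_y|^{-\gamma-2}\bigl[\lambda(\gamma+1)-\Lambda(d-1)\bigr]\geq c_0>0
\]
uniformly on $\overline\Omega$, and $\|D\psi_y\|_{L^\infty(\overline\Omega)}\leq C_\psi$, both uniformly in $y$. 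By uniform continuity of $g$ on the compact set $\partial\Omega$, for each $\delta>0$ pick $r_\delta>0$ with $|g(x)-g(y)|<\delta$ for $x,y\in\partial\Omega$, $|x-y|<r_\delta$, and note $c_{r_\delta}:=\inf\{\psi_y(x):y\in\partial\Omega,\,x\in\overline\Omega,\,|x-y|\geq r_\delta\}>0$ uniformly in $y$. Choosing $M_\delta$ large in terms of $\varepsilon,\|f\|_\infty,\|g\|_\infty,c_0,c_{r_\delta}$, the smooth function
\[
\phi_y^\delta(x) := g(y) + \delta + M_\delta\psi_y(x)
\]
is a classical supersolution of \eqref{eq_deg1} in $\Omega$ (since $F(D^2\phi_y^\delta)\geq\mathcal{P}^-(M_\delta D^2\psi_y)=M_\delta\mathcal{P}^-(D^2\psi_y)\geq M_\delta c_0$ by positive homogeneity of Pucci) and satisfies $\phi_y^\delta\geq g$ on $\partial\Omega$: for $|x-y|<r_\delta$ the additive $\delta$ dominates $g(x)-g(y)$, whereas for $|x-y|\geq r_\delta$ the term $M_\delta\psi_y$ exceeds $2\|g\|_\infty$.

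Define $\overline w(x):=\inf_{y\in\partial\Omega,\,\delta>0}\phi_y^\delta(x)$. On $\partial\Omega$, $\overline w(x_0)\leq\inf_{\delta>0}\phi_{x_0}^\delta(x_0)=g(x_0)$, while $\overline w\geq g$ on $\partial\Omega$ by the previous step, so $\overline w=g$ there. As an infimum of classical (hence viscosity) supersolutions, $\overline w$ is a viscosity supersolution once its (lower semi)continuity is confirmed. The subsolution $\underline w(x):=\sup_{y\in\partial\Omega,\,\delta>0}\bigl[g(y)-\delta-M_\delta\psi_y(x)\bigr]$ is produced by the symmetric construction, and $\underline w\leq\overline w$ in $\Omega$ is enforced by Proposition~\ref{prop_comparison}. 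The main obstacle is establishing \emph{genuine continuity} of $\overline w$ on $\overline\Omega$: the Lipschitz constant of $\phi_y^\delta$, bounded by $M_\delta\|D\psi_y\|_{L^\infty}$, diverges as $\delta\to 0$. Interior continuity follows, at each fixed $\delta$, from uniform-in-$y$ Lipschitz bounds on $\phi_y^\delta$ and then passing to the infimum in $\delta$; continuity up to $\partial\Omega$ requires a balancing argument, writing $\overline w(x)-g(x_0)\leq\inf_{\delta>0}\bigl\{\delta+CM_\delta|x-x_0|\bigr\}$ for $x\in\Omega$ near $x_0\in\partial\Omega$ and optimising in $\delta$, which extracts a modulus of continuity for $\overline w$ at the boundary from that of $g$.
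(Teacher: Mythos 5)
Your construction follows the same route as the paper (Perron's method combined with exterior-sphere barriers of the form $R^{-\gamma}-|x-z_y|^{-\gamma}$, shifted by $g(y)+\delta$, scaled by a large constant and then infimized over $y$ and $\delta$), but there is one genuine gap: your reduction of the supersolution inequality via $(\varepsilon+|Du|)^{\theta_\varepsilon^v}\geq\varepsilon^{\theta_2}$ forces the barrier constants $M_\delta$ to be ``large in terms of $\varepsilon$''; indeed, when $f\not\equiv0$ you need $F(D^2\phi^\delta_y)\geq\varepsilon^{-\theta_2}\|f\|_{L^\infty(\Omega)}$ up to bounded terms, so $M_\delta$ must grow like $\varepsilon^{-\theta_2}$ as $\varepsilon\to0$. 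Consequently the functions $\overline w,\underline w$ you build depend on $\varepsilon$, whereas the lemma asserts (and the rest of the paper uses) a \emph{single} pair $\underline w,\overline w$ valid for every $0<\varepsilon<1$ and every $v$: this uniformity is exactly what makes the set $B$ in \eqref{eq_setB} independent of $\varepsilon$ and $v$, and what yields the $\varepsilon$-uniform bound $\underline w\leq u_\varepsilon\leq\overline w$ needed to pass to the limit $\varepsilon_n\to0$ in the proof of Theorem \ref{thm_existence}. The paper never invokes $\varepsilon^{\theta_2}$: it arranges the barriers to have gradient bounded below by $1$ throughout $\Omega$ (the global paraboloid is centered at a point with $\operatorname{dist}(x_0,\Omega)\geq1$ and $K_1\geq\lambda d$, and $M$ is chosen so that $M\alpha R_1^{-(1+\alpha)}\geq1$), so that $(\varepsilon+|Dw|)^{\theta_\varepsilon^v}\geq1$ and an inequality of the form $\varepsilon w+F(D^2w)\geq\|f\|_{L^\infty(\Omega)}+\|g\|_{L^\infty(\partial\Omega)}$ suffices, with all constants independent of $\varepsilon$ and $v$.

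The fix is available inside your own construction: on $\overline\Omega$ one has $R\leq|x-z_y|\leq R+\operatorname{diam}(\Omega)$, hence $|D\psi_y|=\gamma|x-z_y|^{-\gamma-1}\geq\gamma\left(R+\operatorname{diam}(\Omega)\right)^{-\gamma-1}>0$ uniformly in $y$; choosing $M_\delta$ so large that $M_\delta\,\gamma\left(R+\operatorname{diam}(\Omega)\right)^{-\gamma-1}\geq1$ and $M_\delta c_0\geq\|f\|_{L^\infty(\Omega)}+\|g\|_{L^\infty(\partial\Omega)}$ (both conditions independent of $\varepsilon$ and $v$, since $\theta_1\leq\theta_\varepsilon^v\leq\theta_2$) makes $\phi^\delta_y$ a classical, hence viscosity, supersolution of \eqref{eq_deg1} for all $\varepsilon$ and $v$ simultaneously, and similarly on the subsolution side. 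The remaining ingredients of your argument (the inequality $F\geq\mathcal P^-_{\lambda,\Lambda}$ from $F(0)=0$ and ellipticity, the eigenvalue computation for $D^2\psi_y$, the verification that $\overline w=g$ on $\partial\Omega$) match the paper; the continuity of the infimum, which you correctly flag as the delicate point, is treated at the same informal level in the paper itself, so it is not the decisive issue here.
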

\begin{proof}
We construct a continuous viscosity supersolution $\overline w$ of \eqref{eq_deg1} for every $0<\varepsilon<1$ and $v\in  C(\overline\Omega)$ such that $\overline w=g$ on $\partial\Omega$. We first construct a global supersolution to \eqref{eq_deg1}. Let $\|f\|_{L^\infty(\Omega)}=:K$. We choose a point $x_0$ such that ${\rm dist}(x_0,\Omega)\geq 1$. Denote $K_1:=\max(K,\lambda d)$ and let
\[
w_1(x):=K_2-\frac{K_1}{2\lambda d}|x-x_0|^2,
\]
where $K_2$ is such that $w_1>\|g\|_{L^\infty(\partial\Omega)}$ on $\partial\Omega$. Then for $x\in\Omega$,
\[
	\left(\varepsilon+|Dw_1(x)|\right)^{\theta^\varepsilon_v(x)}\left[\varepsilon w_1(x)+F(D^2w_1(x))\right]\geq K_1\geq f(x).
\]
Let $R>0$; for every $y\in
\partial\Omega$, let $x_y$ be such that $|y-x_y|=R$ and $\overline{B_R(x_y)}\cap\overline \Omega=\{y\}$. Denote $R_1:=R+{\rm diam}(\Omega)$.
Define for $\alpha>2, M>0$,
$w_y(x):=M(R^{-\alpha}-|x-x_y|^{-\alpha})$. Then $w_y(y)=0, w_y(x)>0$ in $\Omega$ and 
\[
Dw_y(x)=M\alpha \frac{x-x_y}{|x-x_y|^{\alpha+2}}
\]
so
\[
|Dw_y|\geq M\alpha\frac{1}{R_1^{1+\alpha}}\quad\mbox{in}\,\,\Omega.
\]
Also
\[
D^2w_y(x)=M\alpha \frac{I}{|x-x_y|^{\alpha+2}}-M\alpha (\alpha+2)\frac{(x-x_y)\otimes(x-x_y)}{|x-x_y|^{\alpha+4}}.
\]
We notice that if $\lambda(\alpha+2)-d\Lambda\geq 1$, then
\[
F(D^2w_y(x))\geq M\alpha \frac{1}{|x-x_y|^{\alpha+2}}(\lambda(\alpha+2)-d\Lambda)
\geq M\alpha \frac{1}{|x-x_y|^{\alpha+2}}.
\]
We now fix $M$ such that
\[
M\alpha\frac{1}{R_1^{1+\alpha}}\geq 1\quad\mbox{and}\quad M\alpha\frac{1}{R_1^{2+\alpha}}\geq K+\|g\|_{L^\infty(\partial\Omega)}.
\]
For $0<\eta<1$ we now define the functions
\[
w_{y,\eta}(x):=g(y)+\eta+C_\eta w_y(x),
\]
where the constants $C_\eta$ are such that $C_\eta\geq 1$ and $w_{y,\eta}\geq g$ on $\partial\Omega$. We notice that the $C_\eta$ only depend 
on the modulus of continuity of $g$ and are independent of $y$. Then, for every $0<\varepsilon,\eta<1$ and $x\in\Omega$
\[
\begin{split}
\left(\varepsilon+|Dw_{y,\eta}(x)|\right)^{\theta^\varepsilon_v(x)}&\left[\varepsilon w_{y,\eta}(x)+F(D^2w_{y,\eta}(x))\right]
\\
&\geq -\|g\|_{L^\infty(\partial\Omega)}+C_\eta M\alpha\frac{1}{R_1^{2+\alpha}}\geq K\geq f(x).
\end{split}
\]
Therefore, the functions $w_{y,\eta}$ are supersolutions of \eqref{eq_deg1} for every $0<\varepsilon,\eta<1$ and $y\in\partial\Omega$. Thus the functions
\[
\tilde w_{y,\eta}(x):=\min(w_{y,\eta}(x),w_1(x))
\]
are viscosity supersolutions of \eqref{eq_deg1}. Finally the function
\[
\overline w(x):=\inf\{\tilde w_{y,\eta}(x):y\in\partial\Omega, 0<\eta<1\}
\]
is the required viscosity supersolution of \eqref{eq_deg1} and $\overline w=g$ on $\partial\Omega$. A viscosity subsolution $\underline w$ of \eqref{eq_deg1} such that $\underline w=g$ on $\partial\Omega$ is constructed similarly.
\end{proof}

The existence of a unique viscosity solution to the approximating equations \eqref{eq_deg1} follows from Lemma \ref{lem_existepsilon}, together with the comparison principle in Proposition \ref{prop_comparison} and Perron's method.

\begin{Corollary}\label{cor_existepsilon}
Let $\Omega$ be a bounded domain which satisfies a uniform exterior sphere condition. Let Assumptions A\ref{assump_Felliptic} and A\ref{assump_theta} hold and let $f\in C(\overline{\Omega}),g\in C(\partial\Omega)$. Then, for every $0<\varepsilon<1$ and $v\in  C(\overline\Omega)$, there exists a unique viscosity solution $u_{\varepsilon}^v$ to \eqref{eq_deg1} such that $\underline{w}\leq u_{\varepsilon}^v\leq\overline{w}$ in $\overline{\Omega}$. Moreover, there exists $\beta=
\beta(d,\lambda,\Lambda)>0$ such that
for every $\Omega'\Subset \Omega$, 
\begin{equation}\label{eq:cbeta1}
	\left\|u_{\varepsilon}^v\right\|_{C^\beta(\Omega')}\,\leq\,C,
\end{equation}
for some $C=C\left(d,\lambda,\Lambda,\|u_{\varepsilon}^v\|_{L^\infty(\Omega)},\|f\|_{L^\infty(\Omega)},{\rm dist}(\Omega',\partial\Omega)\right)$.
\end{Corollary}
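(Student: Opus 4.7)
\medskip

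\noindent\textbf{Proof plan.} The corollary splits into two essentially independent parts: existence/uniqueness via Perron's method, and the local $C^\beta$-bound via Mooney's estimate (Proposition \ref{prop_mooney}).

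For existence and uniqueness, I would simply feed the outputs of Lemma \ref{lem_existepsilon} and Proposition \ref{prop_comparison} into the Perron lemma stated above. Lemma \ref{lem_existepsilon} supplies continuous sub- and supersolutions $\underline{w}\leq\overline{w}$ of \eqref{eq_deg1} coinciding with $g$ on $\partial\Omega$, and Proposition \ref{prop_comparison} is the comparison principle for \eqref{eq_deg1}. Perron then produces a viscosity solution $u_\varepsilon^v\in C(\overline\Omega)$ sandwiched between $\underline{w}$ and $\overline{w}$, which forces $u_\varepsilon^v=g$ on $\partial\Omega$ by squeezing. Uniqueness among all viscosity solutions attaining the boundary datum $g$ is immediate from Proposition \ref{prop_comparison}.

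For the H\"older estimate, the strategy is to show that on the open set $\{|Du_\varepsilon^v|>1\}$ the solution is a viscosity subsolution to $\mathcal{P}^-(D^2u)=C_0$ and a viscosity supersolution to $\mathcal{P}^+(D^2u)=-C_0$, with $C_0:=\|f\|_{L^\infty(\Omega)}+\|u_\varepsilon^v\|_{L^\infty(\Omega)}$. Proposition \ref{prop_mooney} (with $\gamma=1$) then applies directly, and a standard rescaling/covering argument converts its statement on $B_1$ into a bound on any $\Omega'\Subset\Omega$ with a constant depending only on the quantities listed in \eqref{eq:cbeta1}. To extract the Pucci inequalities from \eqref{eq_deg1}, I would test from above by $\varphi\in C^2$ at a point $x_0$ with $|D\varphi(x_0)|>1$; since $\theta_v^\varepsilon\geq\theta_1>0$, one has $(\varepsilon+|D\varphi(x_0)|)^{\theta_v^\varepsilon(x_0)}\geq 1$, so the viscosity subsolution inequality reduces to $F(D^2\varphi(x_0))\leq f(x_0)-\varepsilon u_\varepsilon^v(x_0)\leq C_0$. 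Using $F(0)=0$ and Assumption \ref{assump_Felliptic} one then gets $\mathcal{P}^-(D^2\varphi(x_0))\leq F(D^2\varphi(x_0))\leq C_0$. The symmetric argument with a test function from below yields the matching $\mathcal{P}^+$ inequality.

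The main, and in fact genuinely minor, obstacle is this reduction to Pucci inequalities on $\{|Du|>1\}$: one only has to handle the test-function convention in the definition of viscosity sub- and supersolution carefully, and notice that $\varepsilon+|D\varphi|$ stays bounded away from $0$ at the relevant contact points so that dividing \eqref{eq_deg1} by the degeneracy factor is harmless. Everything else --- Perron, comparison, Mooney's estimate, and the scaling argument --- is machinery already assembled in the paper and applies off the shelf.
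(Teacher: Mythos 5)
Your proposal is correct and follows essentially the same route as the paper: existence and uniqueness by feeding Lemma \ref{lem_existepsilon} and Proposition \ref{prop_comparison} into Perron's method, and the estimate \eqref{eq:cbeta1} by observing that on $\{|Du_\varepsilon^v|>1\}$ the degeneracy factor is at least $1$, so $u_\varepsilon^v$ satisfies the Pucci inequalities $\mathcal{P}^-_{\lambda,\Lambda}(D^2u_\varepsilon^v)\leq C_0$ and $\mathcal{P}^+_{\lambda,\Lambda}(D^2u_\varepsilon^v)\geq -C_0$ with $C_0=\|f\|_{L^\infty(\Omega)}+\|u_\varepsilon^v\|_{L^\infty(\Omega)}$, whence Proposition \ref{prop_mooney} applies. (Only a cosmetic point: at a contact point the subsolution inequality gives $F(D^2\varphi(x_0))\leq f(x_0)\left(\varepsilon+|D\varphi(x_0)|\right)^{-\theta_v^\varepsilon(x_0)}-\varepsilon u_\varepsilon^v(x_0)$ rather than $f(x_0)-\varepsilon u_\varepsilon^v(x_0)$ when $f(x_0)<0$, but this is still bounded by $C_0$, so the argument is unaffected.)
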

\begin{proof}
We only need to show \eqref{eq:cbeta1}. To this end we notice that $u_{\varepsilon}^v$ is a viscosity subsolution of
\[
	F(D^2u_{\varepsilon}^v)=\|f\|_{L^\infty(\Omega)}+\|u_{\varepsilon}^v\|_{L^\infty(\Omega)}\hspace{.4in}\mbox{in}\hspace{.1in}\{|Du|>1\},
\]
and thus it is a viscosity subsolution to
\[
	\mathcal{P}^-_{\lambda,\Lambda}(D^2u_{\varepsilon}^v)=\|f\|_{L^\infty(\Omega)}+\|u_{\varepsilon}^v\|_{L^\infty(\Omega)}\hspace{.4in}\mbox{in}\hspace{.1in}\{|Du|>1\}.	
\]
Similarly we obtain that in the set $\{|Du|>1\}$, $u_{\varepsilon}^v$ is a viscosity supersolution to
\[
	\mathcal{P}^+_{\lambda,\Lambda}(D^2u_{\varepsilon}^v)=-\|f\|_{L^\infty(\Omega)}-\|u_{\varepsilon}^v\|_{L^\infty(\Omega)}\hspace{.4in}\mbox{in}\hspace{.1in}\{|Du|>1\}.	
\]
The result now follows by an easy application of Proposition \ref{prop_mooney}.
\end{proof}

Now we introduce the set $B\subset C(\overline\Omega)$, given by
\begin{equation}\label{eq_setB}
	B:=\left\lbrace w\in C(\overline\Omega)\;\;|\;\;\underline w\leq w\leq\overline w\right\rbrace,
\end{equation}
where $\underline w,\overline w:\Omega\to\mathbb{R}$ are the sub and supersolution from Lemma \ref{lem_existepsilon}, respectively. It is clear that $B$ is a convex and closed subset of $C(\overline\Omega)$. Define a map $T:B\to C(\overline\Omega)$ as follows. Given $v\in B$, let $u_\varepsilon^v$ be the unique solution to \eqref{eq_deg1} such that $u_\varepsilon^v=g$ on $\partial\Omega$, whose existence is the subject of Corollary \ref{cor_existepsilon}. Set 
\begin{equation}\label{eq_mapT}
	Tv:=u_\varepsilon^v
\end{equation}
The next lemma collects some properties of the map $T$.

\begin{Lemma}[Properties of the map $T$]\label{lem_propT}
Let $B\subset C(\overline\Omega)$ and $T:B\to C(\overline\Omega)$ be defined as in \eqref{eq_setB} and \eqref{eq_mapT}, respectively. Then $T(B)\subset B$. In addition, $T(B)$ is precompact in $B$ and the map $T$ is continuous.
\end{Lemma}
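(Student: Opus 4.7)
The plan is to verify the three claims in turn, invoking the comparison principle of Proposition \ref{prop_comparison}, an Arzel\`a--Ascoli argument built from the interior H\"older estimate in Corollary \ref{cor_existepsilon} together with the global barriers of Lemma \ref{lem_existepsilon}, and finally the standard stability theory of viscosity solutions under locally uniform convergence of the operators.

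For $T(B)\subset B$, I would fix $v\in B$ and simply compare. The crucial observation is that $\underline w$ and $\overline w$ constructed in Lemma \ref{lem_existepsilon} are a subsolution and a supersolution of \eqref{eq_deg1} \emph{for every} exponent function $\theta_\varepsilon^v$, and all three of $\underline w,u_\varepsilon^v,\overline w$ agree with $g$ on $\partial\Omega$. The comparison principle of Proposition \ref{prop_comparison} applied to \eqref{eq_deg1} with the fixed coefficient $\theta_\varepsilon^v$ then yields $\underline w\leq u_\varepsilon^v\leq \overline w$ in $\overline\Omega$, hence $Tv\in B$.

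For the precompactness of $T(B)$ I would apply Arzel\`a--Ascoli. The sandwiching $\underline w\leq Tv\leq \overline w$ furnishes a uniform $L^\infty$-bound on $Tv$ independent of $v$; Corollary \ref{cor_existepsilon} therefore supplies a uniform $C^\beta$-bound on every $\Omega'\Subset\Omega$ with constants independent of $v$. To upgrade this to equicontinuity up to $\partial\Omega$, I would exploit the barriers once more: for any $x\in\Omega$ and any $y_0\in\partial\Omega$,
\[
\bigl|(Tv)(x)-g(y_0)\bigr|\leq \max\bigl\{\overline w(x)-g(y_0),\,g(y_0)-\underline w(x)\bigr\},
\]
and the right-hand side tends to $0$ as $x\to y_0$ with a modulus that depends only on the moduli of continuity of $\overline w,\underline w,g$ and on the uniform exterior sphere radius, all of which are independent of $v$. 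Splicing the interior H\"older control with this boundary modulus via a standard covering argument produces a single modulus of continuity on $\overline\Omega$ common to the entire family $\{Tv:v\in B\}$, which is the main technical step.

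For the continuity of $T$, assume $v_n\to v$ uniformly on $\overline\Omega$. Since $g_\varepsilon^{v}$ depends on $v$ pointwise through a fixed $\tfrac{1}{2\varepsilon}$-Lipschitz truncation and $h_\varepsilon^v=g_\varepsilon^v\ast\eta_\varepsilon$, one has $\theta_\varepsilon^{v_n}\to\theta_\varepsilon^{v}$ uniformly on $\overline\Omega$, so the operators in \eqref{eq_deg1} converge uniformly on compact subsets of $\overline\Omega\times\mathbb{R}\times\mathbb{R}^d\times S(d)$. The precompactness just established gives a subsequence, still denoted $Tv_n$, converging uniformly on $\overline\Omega$ to some $u^*\in C(\overline\Omega)$ with $u^*=g$ on $\partial\Omega$. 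Standard stability of viscosity solutions under uniformly convergent operators and uniformly convergent solutions then shows that $u^*$ is a viscosity solution of \eqref{eq_deg1} associated with $v$, and the uniqueness furnished by Proposition \ref{prop_comparison} forces $u^*=Tv$. Because every subsequence of $(Tv_n)$ admits a further subsequence converging to the same limit $Tv$, the full sequence converges, proving continuity. The principal obstacle I expect is the clean assembly of the uniform global modulus of continuity in the precompactness step; everything else reduces to routine applications of comparison, stability and the estimates already established.
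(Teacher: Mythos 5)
Your proposal is correct and follows essentially the same route as the paper: trapping $Tv$ between the $v$-independent barriers $\underline w,\overline w$ for $T(B)\subset B$, combining the interior $C^\beta$ estimate of Corollary \ref{cor_existepsilon} with the barrier-induced boundary modulus for precompactness via Arzel\`a--Ascoli, and using locally uniform convergence of the operators plus stability and uniqueness (with the subsequence trick) for continuity. The only difference is that you spell out the boundary equicontinuity and the comparison argument explicitly, which the paper leaves implicit in Corollary \ref{cor_existepsilon} and the sandwiching $\underline w\leq Tv_n\leq\overline w$.
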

\begin{proof}
Let $v\in B$. Corollary \ref{cor_existepsilon} and the definition of $T$ imply that $\underline w\leq Tv\leq \overline w$, hence $T(B)\subset B$. We emphasize that 
$\underline w$ and $\overline w$ are independent of $v\in C(\overline\Omega)$ and $\varepsilon>0$.

Now we observe that $T(B)$ is precompact. Let $(Tv_n)_{n\in\mathbb{N}}$ be a sequence in $T(B)$. Estimate \eqref{eq:cbeta1}, together with
$\underline w\leq Tv_n\leq \overline w$, implies that the sequence $(Tv_n)_{n\in\mathbb{N}}$ is equibounded and equicontinuous in $C(\overline\Omega)$. Hence, it has a subsequence which converges to some $w\in B$.

To complete the proof, we show that $T$ is continuous. Suppose that $(v_n)_{n\in\mathbb{N}}$ is a sequence in $B$ which converges in $C(\overline\Omega)$ to $v\in B$. We need to verify that $Tv_n\to Tv$ in $C(\overline\Omega)$. Since $T(B)$ is precompact, there exists $w\in B$ such that $Tv_n\to w$ in $C(\overline\Omega)$, through a subsequence if necessary.

We notice that $h_\varepsilon^{v_n}$ converges uniformly to $h_\varepsilon^{v}$ in $\overline\Omega$, since $v_n\to v$ uniformly. As a consequence, the sequence of operators $(G_\varepsilon^n)_{n\in\mathbb{N}}$ given by
\[
	G_\varepsilon^n(x,r,p,M):=\left(\varepsilon+|p|\right)^{\theta_\varepsilon^{v_n}(x)}\left(\varepsilon r+F(M)\right)
\]
converges locally uniformly to $G_\varepsilon^\infty$, where
\[
	G_\varepsilon^\infty(x,r,p,M):=\left(\varepsilon+|p|\right)^{\theta_\varepsilon^{v}(x)}\left(\varepsilon r+F(M)\right).
\]
The stability of viscosity solutions and the uniqueness of viscosity solutions to our Dirichlet boundary value problem for \eqref{eq_deg1} ensure that $w=Tv$. To complete the proof it remains to notice that this argument does not depend on the subsequence. 

Suppose through a different subsequence $(Tv_{n_j})_{j\in\mathbb{N}}$, we obtain $Tv_{n_j}\to w'$. Once again, the stability of viscosity solutions yields $Tv=w'$. The uniqueness of viscosity solutions ensures $w=w'$ and the proof is complete.
\end{proof}

In the sequel we detail the proof of Theorem \ref{thm_existence}.

\begin{proof}[Proof of Theorem \ref{thm_existence}]
Lemma \ref{lem_propT} and properties of the set $B$ allow us to apply the Schauder Fixed Point Theorem; see, for example, \cite[Corollary 11.2]{GT2001}. For every 
$\varepsilon>0$, we conclude that there exists a viscosity solution $u_\varepsilon\in C(\overline\Omega)$ to 
\[
	\left(\varepsilon+|Du_\varepsilon|\right)^{\theta_\varepsilon^{u_\varepsilon}(x)}\left(\varepsilon u_\varepsilon+F(D^2u_\varepsilon)\right)=f(x)\hspace{.2in}\mbox{in}\,\,\Omega,
\]
such that $u_\varepsilon=g$ on $\partial\Omega$. Again, estimate \eqref{eq:cbeta1}, together with
$\underline w\leq u_\varepsilon\leq \overline w$, ensures the existence of a sequence $(u_{\varepsilon_n})_{n\in\mathbb{N}}$, with $\varepsilon_n<1/n$, and a function $u\in B$, such that $u_{\varepsilon_n}\to u$ in $C(\overline\Omega)$. Using the fact that $\theta_\varepsilon^{u_\varepsilon}$ converges to $\theta_1\chi_{\{u>0\}}+\theta_2\chi_{\{u<0\}}$ uniformly on compact subsets of $(\Omega^+(u)\cup\Omega^-(u))\cap\Omega$, a standard consistency argument now allows us to conclude that $u$ is a viscosity solution to 
\eqref{eq_deg0} in $(\Omega^+(u)\cup\Omega^-(u))\cap\Omega$. Moreover, since $0\leq \theta_\varepsilon^{u_\varepsilon}\leq\theta_2$, $u$ is also a viscosity subsolution to
\eqref{eq_cviscsub}
and a viscosity supersolution to
\eqref{eq_cviscsuper} in $\Omega$.
\end{proof}

\section{Towards improved regularity}\label{sec_improvedreg}

In this section we prove Theorem \ref{teo_main2}. 
We first establish H\"older continuity of viscosity solutions to differential inequalities \eqref{eq_cil1}-\eqref{eq_cil2}, for arbitrary vector $q\in\mathbb{R}^d$.
Proposition \ref{prop_compactness} below is a version of Lemma 3 of \cite{Imbert-Silvestre2013} and its proof follows the strategy of the proof of 
Lemma 3 of \cite{Imbert-Silvestre2013}. However we present the proof with all details. We emphasize that the H\"older-estimate 
in Proposition \ref{prop_compactness} does not depend on $q$.

\begin{Proposition}[H\"older-continuity]\label{prop_compactness}
Let Assumptions A\ref{assump_Felliptic}, A\ref{assump_theta} hold. Let $u \in  C(B_1)$ be a viscosity subsolution to 
\eqref{eq_cil1}
and a viscosity supersolution to
\eqref{eq_cil2}
in the unit ball $B_1$, where $q\in\mathbb{R}^d$ is arbitrary. Suppose that $\|u\|_{L^\infty(B_1)}\leq 1$.
Then $u\in C^\beta_{\rm loc}(B_1)$, where $\beta$ is from Corollary \ref{prop_hrm}
and for every $0<\tau<1$, there exists a universal constant $C_\tau>0$ such that
\begin{equation}\label{eq:esttau}
	\left\|u\right\|_{ C^\beta(B_{\tau})}\,\leq\,C_\tau.
\end{equation}
\end{Proposition}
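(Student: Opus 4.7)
My plan is to adapt the Ishii-Lions doubling-of-variables argument from the proof of Lemma 3 of \cite{Imbert-Silvestre2013}. The main source of technical difficulty is to produce a Hölder estimate whose constant does not depend on $q$: the natural shift $v = u + q\cdot x$ satisfies \eqref{eq_cil1}-\eqref{eq_cil2} with $q = 0$, but $\|v\|_{L^\infty} \lesssim 1 + |q|$, so Proposition \ref{prop_mooney} applied to $v$ does not directly yield a uniform estimate. The key structural point I would use repeatedly is that \eqref{eq_cil1} gives the dichotomy $F(D^2\varphi) \leq 1$ or $|q+D\varphi|^{\theta_2}F(D^2\varphi) \leq 1$ at every touch point (with a symmetric statement for the supersolution); whenever $|q+D\varphi| \geq 1$, both alternatives yield pure Pucci bounds, and the subtle case is when the gradient-weighted factor is small.

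For the main argument I would fix $x_0 \in B_\tau$ and set up
\[
\Phi(x,y) := u(x) - u(y) - L\,\omega(|x-y|) - \tfrac{M}{2}\bigl(|x-x_0|^2+|y-x_0|^2\bigr),
\]
with $\omega$ a concave $C^2$ modulus satisfying $\omega(t) = t^\beta$ for small $t$ and $L, M > 0$ to be chosen. Assuming $\sup\Phi > 0$, I take $M$ large (depending only on $\tau$ and $\|u\|_{L^\infty}$) to push the maximum point $(\bar x, \bar y)$ into the interior of $B_1 \times B_1$ with $\bar x \neq \bar y$. Proposition \ref{prop_cil} then furnishes subjets $(p_x,X)$, $(p_y,Y)$ with $p_x, p_y \approx L\omega'(\xi)\hat e$, where $\xi = |\bar x - \bar y|$ and $\hat e = (\bar x - \bar y)/\xi$, together with a matrix inequality yielding the Pucci gain $F(X) - F(Y) \geq c_1 L \xi^{\beta-2}$ via the uniform ellipticity of $F$ and the strict concavity of $\omega$.

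The heart of the proof is the case analysis of the alternatives in the $\min$ of \eqref{eq_cil1} and the $\max$ of \eqref{eq_cil2}. In the pure-$F$ branch on both sides one has $F(X) - F(Y) \leq 2$, which contradicts the Pucci gain for $L$ universal and $\xi$ small. In the gradient-weighted branches, the relation $|p_x - p_y| = O(M)$ allows replacing both gradients by $\eta := |q + L\omega'(\xi)\hat e|$, giving $F(X) - F(Y) \leq 2\eta^{-\theta_2}$. A dichotomy on whether $L\omega'(\xi) \geq 2|q|$ then either closes the contradiction in the regime $\eta \geq L\omega'(\xi)/2$ — where the balance $L^{1+\theta_2} \gtrsim \xi^{\,2-\beta(1+\theta_2)}$ holds with $L$ universal provided $\beta \leq 1/(1+\theta_2)$, consistent with \eqref{eq:alpharange} — or else places us in the regime where $\xi$ exceeds a $|q|$-dependent threshold at which $\Phi(\bar x, \bar y)$ is already controlled by the trivial oscillation bound $|u(\bar x) - u(\bar y)| \leq 2\|u\|_{L^\infty} \leq 2$.

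The delicate point I expect is the uniform-in-$q$ stitching of the Pucci-dominated and the trivially bounded regimes into a single Hölder estimate with a constant depending only on $d, \lambda, \Lambda, \theta_2, \tau$. I would handle this by a dyadic iteration of the above argument at successively smaller scales, combined with the rescaling analysis of Section \ref{subsec_smallness}: at each scale the rescaled problem has normalized $L^\infty$-norm and right-hand side, and the $|q|$-contribution is absorbed into the rescaling so that the per-scale contradiction produces a universal $L$. Assembling the per-scale estimates then yields \eqref{eq:esttau}.
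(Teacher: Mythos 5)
The heart of the matter---an estimate uniform in $q$---is precisely where your argument has a gap. Your case analysis at the maximum point splits according to whether $L\omega'(\xi)\ge 2|q|$. In the complementary regime the only tool you offer is the trivial bound $u(\bar x)-u(\bar y)\le 2$, and to contradict $\Phi(\bar x,\bar y)>0$ there you need $L\omega(\xi)>2$, where $\xi$ is only known to exceed the threshold $\bigl(L\beta/(2|q|)\bigr)^{1/(1-\beta)}$; unwinding this forces $L\gtrsim |q|^{\beta}$, so the Lipschitz/H\"older constant degenerates as $|q|\to\infty$. No refinement of a dichotomy based on the size of the touching gradient alone can avoid this, because in the regime $L\omega'(\xi)\approx|q|$ the quantity $\eta=|q+L\omega'(\xi)\hat e|$ can vanish, the bound $F(X)-F(Y)\le 2\eta^{-\theta_2}$ carries no information, and only the trivial oscillation bound remains. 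The proposed repair---that ``the $|q|$-contribution is absorbed into the rescaling'' in a dyadic iteration---does not work as stated: under the scaling of Section \ref{subsec_smallness} the vector $q$ is merely multiplied by $r/K$, so for huge $|q|$ one would need oscillation decay at the first $\sim\log|q|$ scales before the effective vector becomes bounded, and that decay is exactly what is missing.

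The paper closes this loop with a different dichotomy, on $|q|$ itself rather than on the touching gradient. In the doubling argument (run with $\omega(t)=t-t^2/2$, hence aiming at a Lipschitz-type bound) the touching gradients satisfy $|q_{\bar x}|,|q_{\bar y}|\le aL_1$ for a universal $a$; setting $A_0:=10aL_1$, if $|q|\ge A_0$ then automatically $|q+q_{\bar x}|,|q+q_{\bar y}|\ge\tfrac{9}{10}A_0$, so in every branch of the min/max the degenerate factor contributes at most $CL_1^{-\theta_2}$ and a universal choice of $L_1$ yields the contradiction---note that here the largeness of $|q+q_{\bar x}|$ comes from $|q|$ being large, the opposite mechanism to your favorable case. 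If instead $|q|<A_0$, which is now a \emph{universal} constant, the inequalities \eqref{eq_cil1}--\eqref{eq_cil2} reduce to Pucci inequalities on the set $\{|Du|>2A_0\}$, and Corollary \ref{prop_hrm} (that is, the Imbert--Silvestre result recalled in Proposition \ref{prop_mooney}, applied with $\gamma=2A_0$) gives the $C^\beta$ bound with constant depending only on $d,\lambda,\Lambda,A_0,\tau$; this also explains why $\beta$ in the statement is the exponent from Corollary \ref{prop_hrm}. Your proposal never invokes this ``regularity where the gradient is large'' ingredient, and without it (or a genuine substitute) the bounded-$|q|$ regime, and hence \eqref{eq:esttau} with a $q$-independent constant, is not reached.
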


\begin{proof}
Fix $0<r<\frac{1-\tau}{2}$ and define
\[
\omega(t)\,:=\,t\,-\,\frac{t^2}{2}.
\]

For constants $L_1,\,L_2>0$ and $x_0\in B_{\tau}$, we set
\begin{equation*}
L\,:=\,\sup_{x,y\in B_r(x_0)}\left[u(x)\,-\,u(y)\,-\,L_1\omega(|x-y|)\,-\,L_2(|x-x_0|^2+|y-x_0|^2)\right].
\end{equation*}
We aim at verifying that there are choices of $L_1$ and $L_2$ for which $L\leq 0$ for every $x_0\in B_{\tau}$. This will imply that
$u$ is Lipschitz continuous in $B_{\tau}$ by taking $x_0=x$. 

We argue by contradiction. Suppose there exists $x_0\in B_{\tau}$ for which $L>0$ regardless of the choices of $L_1$ and $L_2$. Consider the auxiliary functions $\psi,\,\phi:\overline B_1\times \overline B_1\to\mathbb{R}$ given by
\[
\psi(x,y)\,:=\, L_1\omega(\left|x\,-\,y\right|)\,+\,L_2\left(\left|x\,-\,x_0\right|^2\,+\,\left|y\,-\,x_0\right|^2\right)
\]
and
\[
	\phi(x,y)\,:=\,u(x)\,-\,u(y)\,-\,\psi(x,y).
\]
Let $(\overline{x},\overline{y})$ be a point where $\phi$ attains its maximum. Then
\[
	\phi(\overline{x},\overline{y})\,=\,L\,>\,0
\]
and
 \[
L_1\omega(\left|\overline{x}\,-\,\overline{y}\right|)\,+\,L_2\left(\left|\overline{x}\,-\,x_0\right|^2\,+\,\left|\overline{y}\,-\,x_0\right|^2\right)\,\leq\,\sup_{x\in B_1}u(x)\,-\,\inf_{x\in B_1}u(x)\,\leq\,2.
\]
Set
\[
	L_2\,:=\,\left(\frac{4\sqrt{2}}{r}\right)^2.
\]
Then, 
\[
	\left|\overline{x}\,-\,x_0\right|\,+\,\left|\overline{y}\,-\,x_0\right|\,\leq\,\frac{r}{2}.
\]
It follows that $\overline{x},\overline{y}\in B_r(x_0)$. In addition, $\overline{x}\neq\overline{y}$; indeed, if this is not the case, we would conclude $L\leq 0$.

At this point, we use Proposition \ref{prop_cil} to obtain elements in the closures of subjets and superjets of $u$ and produce a viscosity inequality relating those elements. We split the rest of the proof into four steps.

\medskip

\noindent{\bf Step 1 - }Proposition \ref{prop_cil} ensures the existence of $(q_{\overline{x}},X)$ in the closure of the subjet of $u$ at $\overline{x}$ and of $(q_{\overline{y}},Y)$ in the closure of the superjet of $u$ at $\overline{y}$, with 
\[
 q_{\bar x}:= D_x\psi(\bar x,\bar y)=L_1\omega'(|\bar x-\bar y|)\frac{\bar x-\bar y}{|\bar x-\bar y|}+2L_2(\bar x-x_0)
\]
and
\[
q_{\bar y}:= -D_y\psi(\bar x,\bar y)=L_1\omega'(|\bar x-\bar y|)\frac{\bar x-\bar y}{|\bar x-\bar y|}-2L_2(\bar y-x_0).
\]
In addition, the matrices $X$ and $Y$ satisfy the inequality
\begin{equation}\label{matrix_inequality1}
	 \left(
		 \begin{array}{ccc}
			 X   & 0 \\
			 0  &-Y 
		 \end{array}
	 \right)
	 \leq 
	 \left(
		 \begin{array}{ccc}
	 		Z  & -Z \\
			 -Z  & Z
		 \end{array}
	 \right)
	 			+(2L_2+\iota)I,
\end{equation}
for 
\[
Z:= L_1\omega''(|\bar x-\bar y|)\frac{(\bar x-\bar y)\otimes(\bar x-\bar y)}{|\bar x-\bar y|^2}+L_1\frac{\omega'(|\bar x-\bar y|)}{|\bar x-\bar y|}\left(I-\frac{(\bar x-\bar y)\otimes(\bar x-\bar y)}{|\bar x-\bar y|^2}\right)
\]
where $0<\iota\ll 1$ depends solely on the norm of $Z$. 
	 
Next, we apply the matrix inequality \eqref{matrix_inequality1} to special vectors as to recover information about the eigenvalues of $X-Y$. First, apply \eqref{matrix_inequality1} to vectors of the form $(z,z)\in\mathbb{R}^{2d}$ to get
\[
	 \langle(X\,-\,Y)z,z\rangle\,\leq\,(4L_2\,+\,2\iota)|z|^2.
\]
We then conclude that all the eigenvalues of $(X-Y)$ are less than or equal to $4L_2+2\iota.$ Now, by applying \eqref{matrix_inequality1} to 
\[
	 \bar{z}:=\left(\frac{\bar x-\bar y}{|\bar x-\bar y|},\frac{\bar y-\bar x}{|\bar x-\bar y|}\right),
\]
we obtain
\[
\left\langle(X-Y)\frac{\bar x-\bar y}{|\bar x-\bar y|},\frac{\bar x-\bar y}{|\bar x-\bar y|}\right\rangle\leq(4L_2+2\iota)\left|\frac{\bar x-\bar y}{|\bar x-\bar y|}\right|^2+4L_1\omega''(|\bar x-\bar y|)=4L_2+2\iota-4L_1.
\]
We conclude that at least one eigenvalue of $(X-Y)$ is below 
\[
4L_2+2\iota-4L_1.
\]
We notice this quantity will be negative for large values of $L_1$. Evaluating the minimal Pucci operator on $X-Y$, we then get
\begin{equation}\label{eq:puccimin}
\mathcal{P}^{-}_{\lambda,\Lambda}(X-Y)\geq 4\lambda L_1-(\lambda+(d-1)\Lambda)(4L_2+2\iota).
\end{equation}	 
At this point we evoke the differential inequalities \eqref{eq_cil1} and \eqref{eq_cil2} satisfied by $u$ in the viscosity sense. They yield
\begin{equation}\label{eq_cil3}
	 \min\left\lbrace |q+q_{\bar x}|^{\theta_2}F(X),\,F(X)\right\rbrace\,\leq\,1
\end{equation}
and 
\begin{equation}\label{eq_cil4}
	\max\left\lbrace |q+q_{\bar y}|^{\theta_2}F(Y),\,F(Y)\right\rbrace\,\geq\,-1.
\end{equation}
Since $F$ is $(\lambda,\Lambda)$-elliptic, we have
\begin{equation}\label{eq:Fell}
	 F(X)\geq F(Y)+\mathcal{P}^{-}_{\lambda,\Lambda}(X-Y).
\end{equation}

\medskip

\noindent{\bf Step 2 - }In what follows, we relate \eqref{eq:puccimin}, \eqref{eq:Fell} and \eqref{eq_cil3}-\eqref{eq_cil4}. Below, we consider all possible cases.

\medskip

\noindent\emph{Case 1:} Suppose 
\[
	\min\left\lbrace |q+q_{\bar x}|^{\theta_2}F(X),\,F(X)\right\rbrace\,=\,F(X)
\]
and
\[
	\max\left\lbrace |q+q_{\bar x}|^{\theta_2}F(Y),\,F(Y)\right\rbrace\,=\,F(Y).
\]
In this case we get
\[
4\lambda L_1\leq (\lambda+(d-1)\Lambda)(4L_2+2\iota)+2.
\]

\medskip

\noindent\emph{Case 2:} Suppose
\[
	\min\left\lbrace |q+q_{\bar x}|^{\theta_2}F(X),\,F(X)\right\rbrace\,=\,\left|q\,+\,q_{\overline{x}}\right|^{\theta_2}F(X)
\]
and
\[
	\max\left\lbrace |q+q_{\bar x}|^{\theta_2}F(Y),\,F(Y)\right\rbrace\,=\,\left|q\,+\,q_{\overline{y}}\right|^{\theta_2}F(Y).
\]
Then,
\[
4\lambda L_1\leq (\lambda+(d-1)\Lambda)(4L_2+2\iota)\,+\,\left|q\,+\,q_{\overline{y}}\right|^{-\theta_2}\,+\,\left|q\,+\,q_{\overline{x}}\right|^{-\theta_2}.
\]

\medskip

\noindent\emph{Case 3:} Suppose
\[
	\min\left\lbrace |q+q_{\bar x}|^{\theta_2}F(X),\,F(X)\right\rbrace\,=\,F(X)
\]
and
\[
	\max\left\lbrace |q+q_{\bar x}|^{\theta_2}F(Y),\,F(Y)\right\rbrace\,=\,\left|q\,+\,q_{\overline{y}}\right|^{\theta_2}F(Y).
\]
In this case we produce
\[
4\lambda L_1\leq (\lambda+(d-1)\Lambda)(4L_2+2\iota)\,+\,\left|q\,+\,q_{\overline{y}}\right|^{-\theta_2}\,+\,1.
\]

\medskip

\noindent\emph{Case 4:} Suppose
\[
	\min\left\lbrace |q+q_{\bar x}|^{\theta_2}F(X),\,F(X)\right\rbrace\,=\,\left|q\,+\,q_{\overline{x}}\right|^{\theta_2}F(X)
\]
and
\[
	\max\left\lbrace |q+q_{\bar x}|^{\theta_2}F(Y),\,F(Y)\right\rbrace\,=\,F(Y).
\]
Here, we obtain
\[
4\lambda L_1\leq (\lambda+(d-1)\Lambda)(4L_2+2\iota)\,+\,1\,+\,\left|q\,+\,q_{\overline{x}}\right|^{-\theta_2}.
\]

%
\medskip

\noindent{\bf Step 3 - }Next, we explore the fact that $q\in\mathbb{R}^d$ is arbitrary, in close connection with Cases 1-4. Observe that
\[
|q_{\bar x}|\leq L_1(1+|\bar x-\bar y|)+2L_2\leq aL_1,
\]
for some $a>0$, universal. Let  $A_0:= 10aL_1$ and assume $|q|\geq A_0$. In this case,  we ensure that $q\neq q_{\bar x}$. A similar reasoning leads to $q\neq q_{\bar y}$.
	 
From the choice of $A_0$ we conclude that
\[
	|q+q_{\bar x}|\geq A_0-\frac{A_0}{10}=\frac{9}{10}A_0;
\]
also $|q+q_{\bar y}|\geq \frac{9}{10}A_0$. Hence, we get
\[
	|q+q_{\bar x}|^{-\theta_2}\leq \left(\frac{9}{10}A_0\right)^{-\theta_2}.
\]
Similarly, we obtain
\[
	|q+q_{\bar y}|^{-\theta_2}\leq \left(\frac{9}{10}A_0\right)^{-\theta_2}.
\]

Thus, the choice of $A_0$ ensures that in all Cases 1-4
\[
4\lambda L_1\leq (\lambda+(d-1)\Lambda)(4L_2+2\iota)\,+\,\frac{C}{L_1^{\theta_2}}+C.
\]
By choosing $L_1\gg 1$ large enough, depending only on $d,\,\lambda,\,\Lambda,\,\theta_2$ and $L_2$, which in turn depends only on $0<r\ll 1$
and $\tau$, we produce a contradiction. Therefore, we cannot have $L>0$.

As a result, in the case $|q|\geq A_0$, solutions to \eqref{eq_cil1}-\eqref{eq_cil2} are locally Lipschitz-continuous, with universal estimates.

\medskip

\noindent{\bf Step 4 - } If $|q|<A_0$ then Corollary \ref{prop_hrm}, applied with $\gamma=2A_0$, guarantees that $u\in C^{\beta}_{\rm loc}(B_1)$ for some universal $\beta\in(0,1)$ and $u$ satisfies \eqref{eq:esttau}.
\end{proof}

\begin{Proposition}[Approximation Lemma]\label{prop_approx}
Let Assumptions A\ref{assump_Felliptic}, A\ref{assump_theta} hold. Let $u \in  C(B_1)$ be a viscosity subsolution to 
\begin{equation}\label{eq_compact111}
\min\left\lbrace |q+Du|^{\theta_2}F(D^2u),\,F(D^2u)\right\rbrace\,=\,c
\end{equation}
and a viscosity supersolution to
\begin{equation}\label{eq_compact211}
\max\left\lbrace |q+Du|^{\theta_2}F(D^2u),\,F(D^2u)\right\rbrace\,=\,-c
\end{equation}
in the unit ball $B_1$, where $c>0$ and $q\in\mathbb{R}^d$ is arbitrary. 
For every $0<\delta<1$ there exists $0<\varepsilon<1$ such that, if $\|u\|_{L^\infty(B_1)}\leq 1$ and
\[
	c\,\leq\,\varepsilon,
\]
then one can find $h\in C^{1,\alpha_0}_{\rm loc}(B_1)$ satisfying $\overline F(D^2h)=0$ in the viscosity sense in $B_1$ for some
$\overline F$ satisfying Assumption A\ref{assump_Felliptic}, such that
\begin{equation}\label{eq:deltaeasu}
	\left\|u\,-\,h\right\|_{L^\infty(B_{3/4})}\,\leq\,\delta.
\end{equation}
Such function $h$ satisfies
\begin{equation}\label{eq:esth1}
	\left\|h\right\|_{ C^{1,\alpha_0}(B_{1/2})}\leq C\left\|h\right\|_{L^\infty(B_{3/4})},
\end{equation}
where $C=C(d,\lambda,\Lambda)>0$ is from Proposition \ref{th:alpha0} and is independent of $q$.
\end{Proposition}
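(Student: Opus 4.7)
The plan is to argue by contradiction, compactness, and stability, leveraging the $q$-independent H\"older estimate provided by Proposition \ref{prop_compactness}. Assume the statement fails for some $\delta_0>0$: then there exist sequences $c_n\to 0$, $q_n\in\mathbb{R}^d$, and $u_n\in C(B_1)$ with $\|u_n\|_{L^\infty(B_1)}\le 1$, each a viscosity subsolution of \eqref{eq_compact111} and supersolution of \eqref{eq_compact211} with data $(q_n,c_n)$, but such that
\[
\|u_n-h\|_{L^\infty(B_{3/4})}>\delta_0
\]
for every admissible $h$. Proposition \ref{prop_compactness} supplies a uniform $C^\beta(B_\tau)$-bound on $(u_n)$ for every $\tau<1$ which is independent of $q_n$, so Arzel\`a--Ascoli extracts a subsequence with $u_n\to u_\infty$ locally uniformly in $B_1$.

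The core step is identifying the limit equation, and I would split according to the behaviour of $(q_n)$. If $(q_n)$ stays bounded, pass to a subsequence $q_n\to q_\infty$ and invoke standard viscosity stability: $u_\infty$ is then a viscosity subsolution to $\min\{|q_\infty+Du|^{\theta_2}F(D^2u),F(D^2u)\}=0$ and a supersolution to the analogous max equation. At a touching point $x_0$ with $q_\infty+D\varphi(x_0)\neq 0$, both entries in the $\min/\max$ share the sign of $F(D^2\varphi(x_0))$, whence $F(D^2\varphi(x_0))\le 0$ (resp.\ $\ge 0$) follows directly. The degenerate case $q_\infty+D\varphi(x_0)=0$ is handled by the usual perturbation trick: replacing $\varphi$ by $\varphi(x)+v\cdot x$ for small $v\in\mathbb{R}^d$, the new maximum point $x_v$ tends to $x_0$ and $q_\infty+D\varphi(x_v)+v\neq 0$ for generic $v$, so passing $v\to 0$ recovers the inequality.

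If instead $|q_n|\to\infty$, a direct calculation suffices: for $\varphi$ touching $u_\infty$ strictly from above at $x_0$, pick $x_n\to x_0$ where $\varphi$ (up to additive constants) touches $u_n$ from above; then $|q_n+D\varphi(x_n)|^{\theta_2}\to\infty$, and the subsolution inequality forces
\[
F(D^2\varphi(x_n))\,\le\,\max\!\left(c_n,\;\frac{c_n}{|q_n+D\varphi(x_n)|^{\theta_2}}\right)\longrightarrow 0,
\]
yielding $F(D^2\varphi(x_0))\le 0$; the supersolution part is symmetric. In either case $u_\infty$ is a viscosity solution of $F(D^2u_\infty)=0$ in $B_1$. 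Taking $h:=u_\infty$ and $\overline F:=F$ (or a harmless rescaling $\overline F(M):=\rho^{-2}F(\rho^2 M)$ with the same ellipticity constants if one needs $h$ defined on all of $B_1$), Proposition \ref{th:alpha0} gives $h\in C^{1,\alpha_0}_{\rm loc}(B_1)$ with \eqref{eq:esth1}.

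The contradiction is then immediate, since $\|u_n-h\|_{L^\infty(B_{3/4})}\to 0$. The parts I expect to be delicate are the clean stability passage in the two-sided $\min/\max$ structure and the perturbation procedure addressing $\{q_\infty+D\varphi=0\}$ in the bounded-$q$ regime; the unbounded-$q$ case is in fact the easier one, but only because the $q$-independent H\"older bound of Proposition \ref{prop_compactness} keeps the family precompact regardless of how large $|q_n|$ becomes.
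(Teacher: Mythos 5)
Your overall strategy (contradiction, the $q$-independent H\"older bound of Proposition \ref{prop_compactness}, Arzel\`a--Ascoli, stability, and the dichotomy $|q_n|\to\infty$ versus $q_n\to q_\infty$) is exactly the paper's strategy, and the unbounded-$q$ case and the nondegenerate touching points in the bounded-$q$ case are handled correctly. The genuine gap is the degenerate case $q_\infty+D\varphi(x_0)=0$. You dismiss it with "the usual perturbation trick\ldots $q_\infty+D\varphi(x_v)+v\neq 0$ for generic $v$", but this is precisely the nontrivial point, and the claim as stated is not justified: after tilting the test function by $v\cdot x$, nothing prevents the new touching point $x_v$ from migrating exactly to the critical point of the tilted test function, i.e.\ (for a quadratic test function with Hessian $A$) $A(x_v-x_0)+v=0$ for \emph{every} small $v$, in which case you never see a nondegenerate gradient and no inequality on $F(A)$ is recovered. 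Ruling this scenario out is the content of Lemma 6 of \cite{Imbert-Silvestre2013}, which the paper invokes at exactly this step (after reducing to a quadratic polynomial $p(x)=b\cdot x+\tfrac12 x^TAx$ and disposing of the case $b\neq 0$ as you do). The actual argument is not a genericity statement: one assumes $F_\infty(A)>0$, notes that by ellipticity and $F(0)=0$ this forces $A$ to have a negative eigenvalue, tilts only along such a direction, and uses the strictness of the touching to show that a persistently degenerate touching point would force $u$ to lie above the original test function somewhere, a contradiction. Without this (or an equivalent measure-theoretic argument), your passage from the pair \eqref{eq_contradiction2}--type inequalities to $F_\infty(D^2u_\infty)=0$ is incomplete.

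A secondary point: in the paper's contradiction argument the operator is also allowed to vary, i.e.\ one takes a sequence $(F_n)$ satisfying Assumption A\ref{assump_Felliptic} and extracts a locally uniform limit $F_\infty$ (this is why the conclusion is stated with an $\overline F$, and it is what makes $\varepsilon$ depend only on $d,\lambda,\Lambda,\theta_2,\delta$). Your version keeps $F$ fixed and takes $\overline F=F$; that proves the statement for a single fixed operator, but the uniformity in $F$ is what is actually used later, when the lemma is applied at every dyadic scale to the rescaled operators $F_k(M)=\rho^{k(1-\alpha)}F(\rho^{-k(1-\alpha)}M)$, which change with $k$ while sharing the same ellipticity constants. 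Including the sequence $(F_n)$ costs nothing in your compactness argument (uniform Lipschitz bounds give local uniform convergence) and should be added.
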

\begin{proof}For ease of presentation, we split the proof into six steps. As before, we resort to a contradiction argument.

\medskip

\noindent{\bf Step 1 - }Suppose the statement of the proposition is false. If this is the case, there exist $0<\delta_0<1$, a sequence of functions 
$(u_n)_{n\in\mathbb{N}}\subset C(B_1)$, $\|u_n\|_{L^\infty(B_1)}\leq 1$, a sequence of positive numbers $(c_n)_{n\in\mathbb{N}}$, 
$c_n\to 0$, a sequence of vectors $(q_n)_{n\in\mathbb{N}}$ and a sequence $(F_n)_{n\in\mathbb{N}}$ of operators satisfying
A\ref{assump_Felliptic}, such that 
$u_n$ is a viscosity subsolution to 
\begin{equation}\label{eq_compact1}
\min\left\lbrace |q_n+Du_n|^{\theta_2}F_n(D^2u_n),\,F_n(D^2u_n)\right\rbrace\,=\,c_n
\end{equation}
and a viscosity supersolution to
\begin{equation}\label{eq_compact2}
\max\left\lbrace |q_n+Du_n|^{\theta_2}F_n(D^2u_n),\,F_n(D^2u_n)\right\rbrace\,=\,-c_n
\end{equation}
in the unit ball $B_1$ for every $n\in\mathbb{N}$ but 
\[
	\left\|u_n\,-\,h\right\|_{L^\infty(B_{3/4})}\,\geq\,\delta_0,
\]
for every $h\in C^{1,\alpha_0}_{\rm loc}(B_{1})$ satisfying $\overline F(D^2h)=0$ in the viscosity sense in $B_1$ for some $\overline F$.

\medskip

\noindent{\bf Step 2 - } By Proposition \ref{prop_compactness}, we know that $(u_n)_{n\in\mathbb{N}}$ is bounded in $ C^\beta(B_{\tau})$ for every $0<\tau<1$. Therefore, choosing a subsequence if necessary, it converges uniformly on every compact subset of $B_1$ to some function $u_\infty\in  C^\beta_{\rm loc}(B_1)$, where $\|u_\infty\|_{L^\infty(B_1)}\leq 1$. In addition, $(F_n)_{n\in\mathbb{N}}$ is uniformly Lipschitz continuous. Hence, there exists an operator $F_\infty$ satisfying Assumption A\ref{assump_Felliptic} such that $F_n$ converges to $F_\infty$, locally uniformly. 

Our goal is to prove that $u_\infty$ is a viscosity solution to
\[
	F_\infty(D^2u_\infty)\,=\,0\hspace{.4in}\mbox{in}\hspace{.1in}B_{1}.
\]
We will only show that $u_\infty$ is a viscosity subsolution of $F_\infty(D^2u_\infty)= 0$ as the proof of the supersolution property is analogous.
We consider two cases, depending on the behavior of the sequence $(q_n)_{n\in\mathbb{N}}$. 

\medskip

\noindent{\bf Step 3 - }Firstly, suppose that the sequence $(q_n)_{n\in\mathbb{N}}$ does not admit a convergent subsequence. Then, $|q_n|\to\infty$ as $n\to\infty$. Let $\varphi\in C^2(B_1)$ and suppose that $u_\infty-\varphi$ attains a local maximum at $x_0\in B_1$; we assume this maximum to be strict. Suppose that
\[
	F_\infty(D^2\varphi(x_0))\,>\,0.
\]
Standard arguments yield a sequence $(x_n)_{n\in\mathbb{N}}$, converging to $x_0$, such that $u_n-\varphi$ attains a local maximum at $x_n$. Notice also that $D\varphi(x_n)\to D\varphi(x_0)$ and $D^2\varphi(x_n)\to D^2\varphi(x_0)$. We choose $M\in\mathbb{N}$ such that 
\[
	F_n(D^2\varphi(x_n))\,>\,c_n
\]
and
\[
	\left|q_n\,+\,D\varphi(x_n)\right|\,>\,1,
\]
for $n>M$. Then, for $n>M$, we conclude 
\[
\min\left\lbrace |q_n+D\varphi(x_n)|^{\theta_2}F_n(D^2\varphi(x_n)),F_n(D^2\varphi(x_n))\right\rbrace>c_n
\]
which is a contradiction. A similar argument, using \eqref{eq_compact2}, shows that $u_\infty$ is a viscosity supersolution and so we get
that $u_\infty$ is a viscosity solution to
\[
	F_\infty(D^2u_\infty)\,=\, 0\hspace{.4in}\mbox{in}\hspace{.1in}B_{1}
\]
in this case. 

\medskip

\noindent{\bf Step 4 - }We now consider the complementary case. Namely, suppose $(q_n)_{n\in\mathbb{N}}$ admits a convergent subsequence, still denoted by $(q_n)_{n\in\mathbb{N}}$, such that $q_n\to q_\infty$. By resorting to standard stability results (see, for instance, \cite[Section 6, Remarks 6.2 and 6.3]{Crandall-Ishii-Lions1992}), we conclude that $u_\infty$ is a viscosity subsolution to 
\begin{equation}\label{eq_contradiction2}
	\min\left\lbrace |q_\infty+Du_\infty|^{\theta_2}F_\infty(D^2u_\infty),F_\infty(D^2u_\infty)\right\rbrace =0
\end{equation}
and a viscosity supersolution to
\begin{equation}\label{eq_contradiction3}
	\max\left\lbrace |q_\infty+Du_\infty|^{\theta_2}F_\infty(D^2u_\infty),F_\infty(D^2u_\infty)\right\rbrace =0
\end{equation}
in the unit ball $B_1$. 

We now reduce the problem to the case $q_\infty\equiv 0$. Indeed, by considering $w_\infty:=u_\infty+q_\infty\cdot x$ we get that $w_\infty$ is a viscosity subsolution to 
\begin{equation}\label{eq_contradiction21}
	\min\left\lbrace |Dw_\infty|^{\theta_2}F_\infty(D^2w_\infty),F_\infty(D^2w_\infty)\right\rbrace=0
\end{equation}
and a viscosity supersolution to
\begin{equation}\label{eq_contradiction31}
	\max\left\lbrace |Dw_\infty|^{\theta_2}F_\infty(D^2w_\infty),F_\infty(D^2w_\infty)\right\rbrace=0
\end{equation}
in $B_1$. Because $D^2w_\infty=D^2u_\infty$ in the viscosity sense, by verifying $F_\infty(D^2w_\infty)=0$, we infer $F_\infty(D^2u_\infty)=0$.
We will only argue that $w_\infty$ is a viscosity subsolution of $F_\infty(D^2w_\infty)=0$. 

Let $p(x)$ be a second order polynomial of the form
\[
	p(x)\,:=\,b\cdot x\,+\,\frac{1}{2}x^TAx,
\]
for a vector $b\in\mathbb{R}^d$ and a matrix $A\in\mathbb{R}^{d^2}$, fixed. Suppose that $w_\infty-p$ attains its maximum at $x_0\in B_1$. Without loss of generality we suppose $x_0=0$, $w_\infty(0)=0$ and the maximum is strict. Our goal is to prove that $F_\infty(A)\leq 0$. 

From \eqref{eq_contradiction21} we infer one of the following inequalities:
\[
	|b|^{\theta_1}F_\infty(A)\leq 0, 	\hspace{.2in}|b|^{\theta_2}F_\infty(A)\leq 0\hspace{.15in}\mbox{or}\hspace{.15in}F_\infty(A)\leq 0.
\]
In case $b\neq 0$, $F_\infty(A)\leq 0$ and we are done. If $b=0$ the argument is exactly the same as that in the proof of \cite[Lemma 6]{Imbert-Silvestre2013}.

\medskip

\noindent{\bf Step 5 - } Since $u_\infty$ is a viscosity solution to
$F_\infty(D^2u_\infty)=0$ in $B_{1}$, which satisfies Assumption A\ref{assump_Felliptic}, Proposition \ref{th:alpha0} guarantees that $u_\infty\in C^{1,\alpha_0}_{\rm loc}(B_1)$.
Proposition \ref{th:alpha0} implies
\[
	\left\|u_\infty\right\|_{ C^{1,\alpha_0}(B_{1/2})}\,\leq\,C\left\|u_\infty\right\|_{L^\infty(B_{3/4})}.
\]
Thus, taking $h=u_\infty,\overline F=F_\infty$, we reach a contradiction. This completes the proof.

\end{proof}


\begin{Proposition}[Oscillation control]\label{prop_osc1}
Let Assumptions A\ref{assump_Felliptic}, A\ref{assump_theta} hold. Let $u\in  C(B_1)$ and $\|u\|_{L^\infty(B_{1})}\leq 1$. Let
$\alpha\,\in\,(0,\alpha_0)$.
Set
\[
\rho\,:=\,\min\left\{\frac{1}{2},\left(\frac{1}{2C}\right)^\frac{1}{\alpha_0-\alpha}\right\}\hspace{.4in}\mbox{and}\hspace{.4in}
\delta\,:=\,\frac{\rho^{1+\alpha}}{2},
\]
where $C$ is from Proposition \ref{prop_approx}. If $u$ is a viscosity subsolution to \eqref{eq_compact111} and a viscosity supersolution to \eqref{eq_compact211} for any $q$ and for $c= \varepsilon$, where $\varepsilon$ is from 
Proposition \ref{prop_approx} for the $\delta$ above,
then there exists an affine function $\ell(x)=a+b\cdot x$ such that $|a|\leq C,|b|\leq C$ and
\[
	\sup_{x\in B_\rho}\,\left|u(x)\,-\,\ell(x)\right|\,\leq\,\rho^{1+\alpha}.
\]
\end{Proposition}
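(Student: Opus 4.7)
The plan is a standard approximation-plus-rescaling argument: use Proposition \ref{prop_approx} to find a smooth replacement $h$ close to $u$, take the first-order Taylor polynomial of $h$ at the origin as the candidate affine function $\ell$, and verify the oscillation bound by balancing the approximation error $\delta$ against the $ C^{1,\alpha_0}$ error of $h-\ell$ on $B_\rho$.

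More precisely, the first step is to apply Proposition \ref{prop_approx} with the prescribed $\delta=\rho^{1+\alpha}/2$, producing $\varepsilon>0$ and a function $h\in C^{1,\alpha_0}_{\rm loc}(B_1)$ satisfying $\overline F(D^2h)=0$ in $B_1$ for some $\overline F$ verifying Assumption A\ref{assump_Felliptic}, such that
\[
\left\|u-h\right\|_{L^\infty(B_{3/4})}\leq \delta
\quad\text{and}\quad
\left\|h\right\|_{ C^{1,\alpha_0}(B_{1/2})}\leq C\left\|h\right\|_{L^\infty(B_{3/4})},
\]
where $C$ depends only on $d,\lambda,\Lambda$. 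Since $\|u\|_{L^\infty(B_1)}\leq 1$ and $\delta<1$, the triangle inequality gives $\|h\|_{L^\infty(B_{3/4})}\leq 2$, so $\|h\|_{C^{1,\alpha_0}(B_{1/2})}\leq 2C$.

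Next I would define $\ell(x):=h(0)+Dh(0)\cdot x$, i.e. $a=h(0)$, $b=Dh(0)$. Then $|a|\leq 2C$ and $|b|\leq 2C$, so (up to relabeling the constant) the affine bounds in the statement hold. For $x\in B_\rho\subset B_{1/2}$, the $C^{1,\alpha_0}$-estimate on $h$ gives
\[
|h(x)-\ell(x)|\leq 2C\,|x|^{1+\alpha_0}\leq 2C\,\rho^{1+\alpha_0}.
\]
Combining with the approximation error,
\[
\sup_{x\in B_\rho}\left|u(x)-\ell(x)\right|\leq \delta+2C\rho^{1+\alpha_0}
=\tfrac{\rho^{1+\alpha}}{2}+2C\rho^{\alpha_0-\alpha}\rho^{1+\alpha}.
\]

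Finally, the choice $\rho\leq(1/(2C))^{1/(\alpha_0-\alpha)}$ makes $2C\rho^{\alpha_0-\alpha}\leq 1$ (after absorbing the factor $2$ into $C$ in the obvious way; alternatively one chooses the constant in the definition of $\rho$ to be $4C$), so the right-hand side is bounded by $\rho^{1+\alpha}$, which is the desired oscillation estimate. No step is really an obstacle here; the only thing to keep careful track of is that the constant $C$ in the definition of $\rho$ must match (up to a harmless factor) the constant in Proposition \ref{prop_approx}, and that $\alpha<\alpha_0$ is precisely what makes the power $\rho^{\alpha_0-\alpha}$ small enough to close the argument.
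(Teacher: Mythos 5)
Your proposal is essentially the paper's own proof: apply Proposition \ref{prop_approx} with $\delta=\rho^{1+\alpha}/2$, take $\ell$ to be the first-order Taylor polynomial of the approximating function $h$ at the origin, and close the estimate by the triangle inequality together with the choice of $\rho$ so that $C\rho^{\alpha_0-\alpha}$ is small. The only difference is your explicit tracking of the factor coming from $\|h\|_{L^\infty(B_{3/4})}\leq 2$, a harmless constant adjustment (relabeling $C$ or taking $4C$ in the definition of $\rho$) that the paper simply absorbs into its universal constant.
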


\begin{proof}
Let $h$ be the approximating function whose existence is ensured by Proposition \ref{prop_approx}. Set $\ell(x):=h(0)+Dh(0)\cdot x$. The triangle inequality yields
\[
	\begin{split}
		\sup_{x\in B_\rho}\left|u(x)-\ell(x)\right|&\leq\sup_{x\in B_\rho}\left|u(x)-h(x)\right|+\sup_{x\in B_\rho}\left|h(x)-h(0)-Dh(0)\cdot x\right|\\
			&\leq\,\delta\,+\,C\rho^{1\,+\,\alpha_0}\leq \rho^{1+\alpha}.
	\end{split}
\]
\end{proof}

\begin{Proposition}[Oscillation control in discrete scales]\label{prop_oscdiscrete}
Let the assumptions of Proposition \ref{prop_osc1} be satisfied, however let $q=0$ now and let in addition
\[
	\alpha\leq\frac{1}{1\,+\,\theta_2}.
\]
There exists a sequence of affine functions $(\ell_n)_{n\in\mathbb{N}}$, of the form
\[
	\ell_n(x)\,:=\,a_n\,+\,b_n\cdot x,
\]
satisfying
\begin{equation}\label{eq_c1alpha1}
	\sup_{x\in B_{\rho^n}}\,\left|u(x)\,-\,\ell_n(x)\right|\,\leq\,\rho^{(1+\alpha)n}
\end{equation}
and
\begin{equation}\label{eq_c1alpha2}
	\left|a_{n+1}\,-\,a_n\right|\,+\,\rho^{n}\left|b_{n+1}\,-\,b_n\right|\,\leq\,C\rho^{(1+\alpha)n},
\end{equation}
for every $n\in\mathbb{N}$ and some universal constant $C>0$.
\end{Proposition}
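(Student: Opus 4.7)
The plan is to argue by induction on $n$, rescaling $u$ at each $\rho$-adic scale so that Proposition \ref{prop_osc1} can be reapplied. I take $\ell_0\equiv 0$ as the base case, which trivially satisfies \eqref{eq_c1alpha1} at $n=0$ since $\|u\|_{L^\infty(B_1)}\leq 1$. Assuming $\ell_n(x)=a_n+b_n\cdot x$ has already been constructed to satisfy \eqref{eq_c1alpha1}, I would introduce
\[
    v_n(y):=\rho^{-(1+\alpha)n}\bigl[u(\rho^n y)-\ell_n(\rho^n y)\bigr],\qquad y\in B_1,
\]
so that $\|v_n\|_{L^\infty(B_1)}\leq 1$. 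The derivatives scale as $Du(\rho^n y)=\rho^{\alpha n}Dv_n(y)+b_n$ and $D^2u(\rho^n y)=\rho^{-(1-\alpha)n}D^2v_n(y)$. Setting the rescaled operator $\tilde F_n(M):=\rho^{(1-\alpha)n}F(\rho^{-(1-\alpha)n}M)$, which preserves the $(\lambda,\Lambda)$-ellipticity of $F$, and $q_n:=\rho^{-\alpha n}b_n$, direct substitution into \eqref{eq_compact111}--\eqref{eq_compact211} (taken with $q=0$) translates the subsolution inequality for $u$ into
\[
    \min\bigl\{\rho^{n\alpha\theta_2}|q_n+Dv_n|^{\theta_2}\tilde F_n(D^2v_n),\,\tilde F_n(D^2v_n)\bigr\}\leq\varepsilon\rho^{(1-\alpha)n}\qquad\text{in }B_1,
\]
and the supersolution inequality into the analogous $\max$-inequality bounded below by $-\varepsilon\rho^{(1-\alpha)n}$.

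The main obstacle is that these rescaled inequalities do not exactly match the form required by Proposition \ref{prop_osc1}, because of the additional factor $\mu:=\rho^{n\alpha\theta_2}\in(0,1]$ in front of the degenerate term. To remove it I would prove the following elementary absorption lemma: if $w$ is a viscosity subsolution of $\min\{\mu A,B\}=c'$ with $\mu\in(0,1]$, then $w$ is also a viscosity subsolution of $\min\{A,B\}=c'/\mu$, and the analogous statement (with a sign flip) holds for supersolutions. This follows from a one-line case split at any smooth contact point: either $\mu A\leq c'$, which forces $A\leq c'/\mu$, or $B\leq c'\leq c'/\mu$. Applying the lemma with $\mu=\rho^{n\alpha\theta_2}$ shows that $v_n$ is a subsolution of the correctly shaped inequality, now with right-hand side
\[
    \varepsilon\,\rho^{n(1-\alpha(1+\theta_2))}\leq\varepsilon,
\]
the final bound being precisely the step where the hypothesis $\alpha(1+\theta_2)\leq 1$ is used; the supersolution side is identical.

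With $v_n$ now fitting the hypotheses of Proposition \ref{prop_osc1} (for the vector $q_n$ and the operator $\tilde F_n$, observing that the constant $C$ there is universal and in particular independent of $q_n$), I obtain an affine function $\tilde\ell(y)=\tilde a+\tilde b\cdot y$ with $|\tilde a|,|\tilde b|\leq C$ and $\sup_{B_\rho}|v_n-\tilde\ell|\leq\rho^{1+\alpha}$. I then lift back by setting
\[
    \ell_{n+1}(x):=\ell_n(x)+\rho^{(1+\alpha)n}\tilde a+\rho^{\alpha n}\tilde b\cdot x,
\]
which is equivalent to $a_{n+1}=a_n+\rho^{(1+\alpha)n}\tilde a$ and $b_{n+1}=b_n+\rho^{\alpha n}\tilde b$. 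Unwinding the change of variables turns the estimate on $v_n-\tilde\ell$ into \eqref{eq_c1alpha1} at step $n+1$, while
\[
    |a_{n+1}-a_n|+\rho^n|b_{n+1}-b_n|=\rho^{(1+\alpha)n}\bigl(|\tilde a|+|\tilde b|\bigr)\leq 2C\rho^{(1+\alpha)n}
\]
delivers \eqref{eq_c1alpha2} after relabeling the constant. Apart from the absorption lemma, every step above is routine bookkeeping; the absorption step, together with the sharp algebraic identity $\alpha\theta_2+(1-\alpha)=\alpha(1+\theta_2)$, is what forces the scaling restriction $\alpha\leq 1/(1+\theta_2)$.
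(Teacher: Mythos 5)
Your proposal is correct and follows essentially the same route as the paper: induction on the scale, rescaling $u-\ell_n$ by $\rho^{n(1+\alpha)}$, and reapplying the one-step oscillation control with the drifted vector $q_n=\rho^{-\alpha n}b_n$ and the rescaled operator, with $\alpha\leq\frac{1}{1+\theta_2}$ entering exactly to keep the rescaled right-hand side below $\varepsilon$. Your explicit ``absorption lemma'' is just a spelled-out version of the paper's computation that $v_k$ satisfies the inequalities with $c_k=\varepsilon\max\{\rho^{k(1-\alpha(1+\theta_2))},\rho^{k(1-\alpha)}\}\leq\varepsilon$, so there is no substantive difference.
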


\begin{proof}
We argue by induction.

\medskip

\noindent{\bf Step 1 - } The basis case follows from Proposition \ref{prop_osc1}. In fact, for $h$ as in that proposition, set $\ell_0\equiv 0$ and define $\ell_1(x):=h(0)+Dh(0)\cdot x$. It is clear that 
\[
	\sup_{x\in B_\rho}\left|u(x)\,-\,\ell_1(x)\right|\,\leq\,\rho^{1+\alpha}
\]
and
\[
	|h(0)|\,+\,|Dh(0)|\,\leq\,C,
\]
where $C>0$ is a universal constant.

\medskip

\noindent{\bf Step 2 - } Now, suppose the case $n=k$ has been verified. Next we examine the case $n=k+1$. To that end, we introduce the auxiliary function
\[
	v_k(x)\,:=\,\frac{u(\rho^kx)\,-\,\ell_k(\rho^kx)}{\rho^{k(1+\alpha)}}.
\]
It is easy to see that (see Section \ref{subsec_smallness}) $v_k$ is a viscosity subsolution to 
\[
	\min\left\lbrace \left|\rho^{-k\alpha}b_k+Dv_k\right|^{\theta_2}F_k(D^2v_k),F_k(D^2v_k) \right\rbrace=c_k
\]
and a viscosity supersolution to
\[
	\max\left\lbrace \left|\rho^{-k\alpha}b_k+Dv_k\right|^{\theta_2}F_k(D^2v_k),F_k(D^2v_k) \right\rbrace=-c_k,
\]
where
\[
	F_k(M)\,:=\,\rho^{k(1-\alpha)}F\left(\frac{1}{\rho^{k(1-\alpha)}}M\right)
\]
and
\[
	c_k\,:=\,\varepsilon\left(\max\left\{\rho^{k(1-\alpha(1+\theta_2))},\rho^{k(1-\alpha)}\right\}\right).
\]
Notice that $F_k$ satisfies Assumption A\ref{assump_Felliptic} and the choice of the exponent 
$\alpha$, together with $\rho\leq 1/2$, yields $c_k\leq\varepsilon$.
The induction hypothesis ensures $\|v_k\|_{L^\infty(B_1)}\leq 1$. 

\medskip

\noindent{\bf Step 3 - }Proposition \ref{prop_osc1} now implies that there exists an affine function $\overline{\ell}_k$ satisfying
\begin{equation}\label{eq_vk}
		\sup_{x\in B_\rho}\left|v_k(x)\,-\,\overline{\ell}_k(x)\right|\,\leq\,\rho^{1+\alpha},
\end{equation}
where
\[
	\overline{\ell}_k(x)\,:=\,\overline{a}_k\,+\,\overline{b}_k\cdot x
\]
is such that $|\overline{a}_k|$ and $|\overline{b}_k|$ are bounded by a constant $C>0$, depending solely on $d,\,\lambda$ and $\Lambda$. Using the definition of $v_k$, estimate \eqref{eq_vk} becomes
\[
	\begin{split}
		\sup_{x\in B_\rho}\left|\frac{u(\rho^kx)\,-\,a_k\,-\,b_k\cdot(\rho^kx)\,-\,\rho^{k(1+\alpha)}\left(\overline{a}_k\,+\,\overline{b}_k\cdot x\right)}{\rho^{k(1+\alpha)}}\right|\,\leq\,\rho^{1+\alpha}.
	\end{split}
\]
It implies
\[
	\sup_{x\in B_{\rho^{k+1}}}\left|u(x)\,-\,\ell_{k+1}(c)\right|\,\leq\,\rho^{(k+1)(1+\alpha)}
\]
for
\[
	\ell_{k+1}\,=\,a_{k+1}\,+\,b_{k+1}\cdot x\,:=\,\left(a_k\,+\,\rho^{k(1+\alpha)}\overline{a}_k\right)\,+\,\left(b_k\,+\,\rho^{k\alpha}\overline{b}_k\right)\cdot x.
\]
To complete the proof, we notice that 
\[
	\left|a_{k+1}\,-\,a_k\right|\,\leq\,\left|\overline{a}_k\right|\rho^{k(1+\alpha)}\,\leq\,C\rho^{k(1+\alpha)}
\]
and
\[
	\rho^k\left|b_{k+1}\,-\,b_k\right|\,\leq\,\left|\overline{b}_k\right|\rho^{k(1+\alpha)}\,\leq\,C\rho^{k(1+\alpha)}.
\]

\end{proof}

\begin{proof}[Proof of Theorem \ref{teo_main2}]
Let $\delta,\varepsilon$ be as in Proposition \ref{prop_osc1}. Denote 
\[
K=\|u\|_{L^\infty(B_{1})}+\max\left\lbrace\left\|f\right\|_{L^\infty(B_1)},\left\|f\right\|^\frac{1}{1+\theta_2}_{L^\infty(B_1)} \right\rbrace
\]
and set
\[
v(x)=\frac{u(\varepsilon x)}{K}.
\]
Then  $\|v\|_{L^\infty(B_{1})}\leq 1$ and 
(see Section \ref{subsec_smallness}) $v$ is a viscosity subsolution to
\[
\min\left\lbrace |Dv|^{\theta_2}\overline F(D^2v),\,\overline F(D^2v)\right\rbrace
=\varepsilon\quad\mbox{in}\hspace{.1in}B_{1}
\]
and a viscosity supersolution to
\[
\max\left\lbrace |Dv|^{\theta_2}\overline F(D^2v),\,\overline F(D^2v)\right\rbrace
=
-\varepsilon\quad\mbox{in}\hspace{.1in}B_{1},
\]
where 
\[
\overline F(X)=\frac{\varepsilon^2}{K}F\left(\frac{K}{\varepsilon^2}X\right).
\]
Thus we are in the framework of 
Propositions \ref{prop_approx}-\ref{prop_oscdiscrete}.
From \eqref{eq_c1alpha1}-\eqref{eq_c1alpha2}  we infer
\[
	a_n\,\longrightarrow\,v(0)\hspace{.4in}\mbox{and}\hspace{.4in}b_n\,\longrightarrow\,Dv(0),
\]
with convergence rates of the order
\[
	\left|a_n\,-\,v(0)\right|\,\sim\,\rho^{n(1+\alpha)}\hspace{.3in}\mbox{and}\hspace{.3in}\left|b_n\,-\,Dv(0)\right|\,\sim\,\rho^{n\alpha}.
\]
Therefore
\[
	\begin{split}
		\sup_{x\in B_{\rho^n}}\,\left|v(x)\,-\,v(0)\,-\,Dv(0)\cdot x\right|\,&\leq\,\sup_{x\in B_{\rho^n}}\,\left|v(x)\,-\,a_n\,-\,b_n\cdot x\right|\,+\,C\rho^{n(1+\alpha)}\\
			&\leq\,C\rho^{n(1+\alpha)},
	\end{split}
\]
where the second inequality comes from \eqref{eq_c1alpha1}. Now if $\rho^{m+1}<r\leq \rho^m$ for some $m\in \mathbb{N}$, then
\[
	\begin{split}
		\sup_{x\in B_r}\,\left|v(x)\,-\,v(0)\,-\,Dv(0)\cdot x\right|\,&\leq\,\sup_{x\in B_{\rho^m}}\,\left|v(x)\,-\,v(0)\,-\,Dv(0)\cdot x\right|\\
			&\leq\,C\rho^{m(1+\alpha)}\\
			&\leq\,\frac{C}{\rho^{1+\alpha}}\rho^{(m+1)(1+\alpha)}\\
			&\leq\,Cr^{1+\alpha}.
	\end{split}
\]
This concludes the proof since the same argument can be done for every point $x_0\in B_\tau$ provided we also take 
$\varepsilon<1-\tau$.
\end{proof}

\bigskip

\paragraph{Acknowledgements:} This work was partially supported by the Centre for Mathematics of the University of Coimbra - UIDB/00324/2020, funded by the Portuguese Government through FCT/MCTES. EP is partly supported by CNPq-Brazil (Grants \# 433623/2018-7 and 307500/2017-9), FAPERJ (Grant \# E-26/200.002/2018) and Instituto Serrapilheira (Grant \#1811-25904). GR is partly supported by CAPES. This study was financed in part by the Coordena\c{c}\~ao de Aperfei\c{c}oamento de Pessoal de N\'ivel Superior - Brasil (CAPES) - Finance Code 001.

\bibliography{biblio}
\bibliographystyle{plain}

\bigskip

\noindent\textsc{Gerardo  Huaroto}\\
Department of Mathematics\\
Federal University of Alagoas -- IM -- UFAL\\
57072-900, Cidade Universit\'aria, Mace\'o-Al, Brazil\\
\noindent\texttt{gerardo.cardenas@im.ufal.br}

\vspace{.15in}

\noindent\textsc{Edgard A. Pimentel}\\
University of Coimbra\\
CMUC, Department of Mathematics\\ 
3001-501 Coimbra, Portugal\\
and\\
Pontifical Catholic University of Rio de Janeiro -- PUC-Rio\\
22451-900, G\'avea, Rio de Janeiro-RJ, Brazil\\
\noindent\texttt{edgard.pimentel@mat.uc.pt}

\vspace{.15in}

\noindent\textsc{Giane C. Rampasso}\\
Department of Mathematics\\
University of Campinas -- IMECC -- Unicamp\\
13083-859, Cidade Universit\'aria, Campinas-SP, Brazil\\
\noindent\texttt{girampasso@ime.unicamp.br}

\vspace{.15in}

\noindent\textsc{Andrzej \'{S}wi\k{e}ch}\\
School of Mathematics\\
Georgia Institute of Technology\\
Atlanta GA 30332 USA\\
\noindent\texttt{swiech@math.gatech.edu}

\bigskip

\end{document}